\newcommand{\mb}[1]{\mathbb{#1}}
\newcommand{\B}[1]{\boldsymbol{#1}}
\newcommand{\MB}[1]{\boldsymbol{\mathcal{#1}}}
\newcommand{\MBh}[1]{\widehat{\boldsymbol{\mathcal{#1}}}}
\newcommand{\Bh}[1]{\widehat{\boldsymbol{#1}}}
\newcommand{\Bt}[1]{\widetilde{\boldsymbol{#1}}}
\newcommand{\R}{\mathbb{R}}
\renewcommand{\t}{{}^\top}
\newcommand{\mc}[1]{\mathcal{#1}}
\newcommand{\expect}[1]{\mathbb{E}\left[#1\right]}
\newcommand{\range}{\mathcal{R}}
\newcommand{\id}{\mathsf{id}}
\newcommand{\bmat}[1]{\begin{bmatrix}#1 \end{bmatrix}}
\newcommand{\gampr}{\Gamma}
\DeclareMathOperator*{\argmin}{\arg\min}
\newenvironment{decla}[1]{
    \par\addvspace{10pt}\small\rmfamily
    \trivlist
    \item[\hskip\labelsep
    {\bfseries #1}] }
    {\endtrivlist\addvspace{0pt}}
\journalname{ArXiv preprint}
\begin{document}

\title{Randomized Preconditioned Solvers for Strong Constraint 4D-Var Data Assimilation
}

\titlerunning{Randomized Preconditioned Solvers for SC-4DVAR}        

\author{Amit N. Subrahmanya \and
        Vishwas Rao \and
        Arvind K. Saibaba
}


\institute{
            Amit N. Subrahmanya \at
            Computational Science Laboratory, Department of Computer Science, Virginia Tech, 620 Drillfield Dr., Blacksburg, 24061, VA, USA \\
            \email{amitns@vt.edu}           
            \and
            Vishwas Rao \at
            Argonne National Laboratory, 9700 S Cass Ave., Lemont, 60439, IL, USA\\
            \email{vhebbur@anl.gov}
            \and
            Arvind K. Saibaba \at
            Department of Mathematics, North Carolina State University, 2311 Stinson Dr., Raleigh, 27695, NC, USA\\
            \email{asaibab@ncsu.edu}
}

\date{}

\maketitle

\begin{abstract}

The Strong Constraint 4D Variational (SC-4DVAR) data assimilation method is widely used in climate and weather applications. 
SC-4DVAR involves solving a minimization problem to compute the maximum a posteriori estimate, which we tackle using the Gauss-Newton method. 
The computation of the descent direction is expensive since it involves the solution of a large-scale and potentially ill-conditioned linear system, solved using the preconditioned conjugate gradient (PCG) method. 
To address this cost, we efficiently construct scalable preconditioners using three different randomization techniques, which all rely on a certain low-rank structure involving the Gauss-Newton Hessian. 
The proposed techniques come with theoretical guarantees on the condition number, and at the same time, are amenable to parallelization.
We also develop an adaptive approach to estimate the sketch size and choose between the reuse or recomputation of the preconditioner. 
We demonstrate the performance and effectiveness of our methodology on two representative model problems---the Burgers and barotropic vorticity equation---showing a drastic reduction in both the number of PCG iterations and the number of Gauss-Newton Hessian products after including the preconditioner construction cost.




\keywords{Variational data assimilation \and randomized algorithms \and preconditioned iterative methods \and Gauss-Newton}
\subclass{MSC 35R30 \and MSC 65F99}

\end{abstract}

\section{Introduction}
\label{sec:introduction}

Data assimilation (DA), having origins in meteorology and oceanography, aims to optimally blend prior knowledge coming from a computational model of a dynamical system with noisy observations of reality to estimate the system states, parameters, or controls while quantifying the uncertainties of the said estimates~\cite[]{evensen2022data,asch2016book}.
Data assimilation can be viewed in a Bayesian framework where the current best estimate is obtained from a posterior distribution that combines a prior distribution (of the previous best estimate) with the observational likelihood~\cite[]{Kalnay_B2003}.
The Strong Constraint 4D-Variational approach (SC-4DVAR), is a well-known DA framework used to estimate the initial condition of a system under a perfect model assumption. 
Essentially, the SC-4DVAR discovers an optimal estimate of the initial condition that maximizes the Bayesian posterior probability over a spatiotemporal (hence, the name, 4D) trajectory consisting of background, observation, and model information~\cite[]{LeDimet_1986}.
Typically, the SC-4DVAR optimization problem~\cite[]{evensen2022data} is solved using Newton-based methods such as the Gauss-Newton (GN) approach~\cite[]{nocedal1999numerical,chen2011gnhess}.
Problems with fine-scale discretization require the solution of a large linear system to compute the GN descent direction. 
Iterative linear solvers are typically used since they require matrix-vector products rather than matrix decompositions, making them memory efficient.
In this work, we use the preconditioned conjugate gradient (PCG) method~\cite[]{saad2003iterative}, requiring the actions of {the forward model (FWD)}, tangent linear model (TLM), and the adjoint model (ADJ)~\cite[]{Kalnay_B2003,evensen2022data}. 
The performance of PCG depends crucially on the use of a good preconditioner, which is the central focus of this paper.

In recent years, randomized methods have emerged as a powerful tool in scientific computing for accelerating expensive computations (see, e.g., recent surveys such as~\cite{halko2011finding,martinsson2020randomized}). 
Randomized methods have strong theoretical guarantees, reduced computational costs, and are highly parallelizable. 
A key ingredient in the randomized methods is the use of a sketch of the matrix of interest (obtained by computing the product of the matrix with an appropriately chosen random matrix) which captures the desired essential features of the matrix. 
Randomization for least-squares problems roughly comes in two flavors: sketch and solve, or sketch and precondition. 
In the sketch and solve approach, randomization is used as a dimensionality reduction tool to solve a smaller inexact problem, whereas in the sketch and precondition approach, randomization is used to accelerate the convergence of the PCG method. 

\paragraph{Contributions} This paper proposes multiple, efficient, randomized solvers for SC-4DVAR. 
Our contributions are subdivided into three parts---algorithms, analysis, and numerical experiments. 
\begin{enumerate}
    \item \textbf{Algorithms}: Previously analyzed in ~\cite{flath2011fast,Tanbui_2013,spantini2015optimal,cui2014likelihood}, it is well established that the eigenvalues of the prior preconditioned data-misfit term of the GN Hessian exhibit rapid decay.
    We exploit this fact to propose three different randomized algorithms (i) Randomized SVD (RandSVD), (ii) Nystr\"om, and (iii) Single View (SingleView), to compute a low-rank sketch of the GN Hessian for SC-4DVAR, and demonstrate the reliability of its low-rank sketch, both in theory and practice.
    These three sketching methods can be used in the following ways: 
    \begin{enumerate}
        \item \textbf{SketchSolv}: The GN descent direction is obtained by solving a linear system with the sketched GN Hessian (using the Woodbury formula) to obtain a descent direction. 
        \item \textbf{SketchPrec}: The GN descent direction is obtained by using the GN Hessian in PCG preconditioned by the inverse of the low-rank sketch.  
        We also develop an adaptive approach (called \textbf{SketchPrecA}) that decides on both (1) the sketch reuse (rather than recompute) and (2) the sketch size of the low-rank approximation. 
    \end{enumerate}
    The first two randomized methods (RandSVD, Nystr\"om) are nearly identical in cost, error, and performance. 
    The third method (SingleView), while more expensive to compute, offers an additional level of parallelism.
    \item \textbf{Analysis}: A twofold analysis---{\em structural}, and {\em expectational}---of the proposed sketching methods is presented in this paper.
    \begin{enumerate}
        \item \textbf{Structural analysis}: We analyze the performance of the approximation for a single step of the GN iteration in {two metrics}: the error in the solution, and the condition number of the preconditioned operator of the GN iterations. 
        The analysis is applicable beyond low-rank approximations and the randomized algorithms proposed in this paper. 

        \item \textbf{Analysis in expectation}: We develop specific analyses for the three randomized methods in expectation. 
        The bounds do not show explicit dependence on the size of the matrices and provide insight into the sampling parameters.
    \end{enumerate}
  
    \item \textbf{Numerical Experiments}: {We validate the proposed methods on two model problems: 1D Burgers equation, and the more challenging barotropic-vorticity equation.}
    We compare our methods with the state-of-the-art Lanczos approach, which is inherently sequential.
\end{enumerate}

When compared with the solver preconditioned by the prior (Prec\_{$\B{\gampr}$}), our experiments show that the randomized sketching methods result in
\begin{enumerate*}[label={(\roman*)}]
     \item faster convergence of PCG measured in the number of iterations,
     \item reduction in the total number of TLM and ADJ evaluations {in the online phase}, at the same time,
     \item enable parallel evaluation of multiple batches of TLM and ADJ vector products {in the offline phase}. 
 \end{enumerate*}
As demonstrated by the numerical experiments for the barotropic vorticity problem, the adaptive randomized sketching methods demonstrate even better performance. That is, even including the offline phase, the number of TLM and ADJ evaluations by the adaptive randomized sketching methods is $\sim$ 30\% -- 70\% less than that of Prec\_{$\B{\gampr}$}. Note that the offline phase lends itself to embarrassingly parallel evaluations.

For the Burgers equation, we also investigate 
\begin{enumerate*}[label={(\roman*)}]
    \item the number of random vectors necessary to obtain a good preconditioner,
    \item the influence of diffusion and problem mesh size on the conditioning of the GN Hessian matrix and its relative spectral decay, and
    \item the effect of changing the mesh size on PCG iterations.
\end{enumerate*}
The recommendation is to use the adaptive preconditioning approach---SketchPrecA---due to lower computational cost (uses fewer TLM and ADJ evaluations than other SketchPrec methods).  
At the same time, SketchSolv is not preferred over SketchPrec due to its inherent inexactness. 

\paragraph{Literature review}
 
Randomization has applications in accelerating computations in Bayesian inverse problems in several ways. 
Firstly, it has been used to accelerate optimization by using randomized approximations of the cost function, such as the stochastic and sample average approximation methods in \cite{Shapiro2009}.
Secondly, randomization is used in inverse problems to approximate the posterior covariance through a low-rank approximation of the prior-preconditioned data-misfit part of the Hessian, see~\cite{villa2021hippylib,saibaba2015fast}. 
Third, in~\cite{saibaba2021randomized}, it has been used to reduce the number of PDE solves in Bayesian inference. 
The randomized approaches used here also target the optimization problem of computing the MAP estimate, but use the same idea of low-rank approximation used to approximate the posterior covariance.  

There is a long history of preconditioning iterative methods in the context of data assimilation, see~\cite{freitag2020numerical} for some key references. 
\cite{Tshimanga_2008} present spectral, quasi-Newton, and Ritz limited memory preconditioners for hastening PCG convergence for the incremental variant of the SC-4DVAR (essentially treating the non-linear optimization as a sequence of quadratic optimizations).
Note that all these preconditioners must be computed sequentially with no parallelism for the expensive TLM and ADJ evaluations. 
\cite{Fisher_2009} present a case study of operational SC4D-VAR with spectral Lanczos preconditioner.
They note that reorthogonalization in the Lanczos method speeds up PCG convergence.
While we do not consider quasi-Newton preconditioners in this work, we refer the reader to \cite{Fisher_2009,Gratton_2011}.

\cite{bousserez2018optimal,Bousserez2020enhanced} propose methods similar to SketchSolv using RandSVD to compute the low-rank approximation, whereas our paper considers two other sketching methods and preconditioned solvers.  
\cite{Bousserez2020enhanced} also compare their results with a block-Lanczos method. 
While a block-Lanczos method allows for more parallelization of the Lanczos method, it does not allow for the same amount of parallelization as the multiple randomized methods we propose. 
However, neither paper analyzes the error in the GN descent direction. 

In~\cite{dauvzickaite2021randomised}, the authors consider the forcing formulation of the weak-constraint 4D-Var (WC-4DVAR), with a Nystr\"om as a preconditioner for PCG iterations. 
The previous ideas are extended to a state formulation of the incremental WC-4DVAR in~\cite{dauvzickaite2021time} that allows for parallelism in time. 
Randomized preconditioners for SC-4DVAR are proposed in~\cite[Chapter 5]{scott2022}. 
They compute sketches using RandSVD, although on a slightly different matrix, as they assume that the prior covariance is expensive/difficult to factorize. 
%
To the best of our knowledge, no work addresses computing adaptive randomized preconditioners for SC-4DVAR. 
Additionally, we note that our analysis is related to~\cite{frangella2021randomized}, although the analysis presented there is not in the context of DA applications. However, we also note that our analysis is more general as \cite{frangella2021randomized} focuses on the Nystr\"om approximation. Moreover, we also present and analyze algorithms for RandSVD and SingleView.

{Outside of sketching in the GN optimization algorithm, randomization has been used in the context of other optimization approaches such as Newton (\cite{pilanci2017newton,roosta2019sub,xu2020newton}), interior point methods (\cite{chowdhury2022faster}), alternating directions method of multipliers (\cite{zhao2022nysadmm}), and stochastic gradient descent (\cite{frangella2024sketchysgd}). }
\section{Background}\label{sec:background}

\subsection{Data Assimilation}
Data assimilation for state estimation combines information from three different sources: the prior best estimate, the model dynamics, and observations, to produce an improved estimate of the true state.
Formally, let us denote the unknown true state at time $t_0$ as $\B{x}^{\rm true}_0 \in \R^n$ and the prior best estimate, called the background, by $\B{x}^b_0 \in \R^n$.
The computational model---here, a set of differential equations---which evolves an initial state $\B{x}_0 \in \mathbb{R}^n$ at time $t_0$ to future state $\B{x}_i \in \mathbb{R}^n$, is denoted by  
\begin{equation}\label{eqn:model}
    \B{x}_i = \MB{M}_{t_0 \rightarrow t_i} (\B{x}_0)\,, \text{ where } \MB{M}_{t_0 \rightarrow t_i} : \R^n \to \R^n, \> 1 \leq i \leq n_t.
\end{equation}
Typically, this model is assumed inadequate, due to unknown dynamics, computational approximations, random errors, and so on. 
Observations are expensive to obtain because only a limited number of sensors can be used to collect data and are typically much smaller than the discretization of the system state, and at the same time are imperfect due to measurement errors. 
This makes the problem of estimating the initial conditions $\B{x}_0$ an ill-posed problem.
Specifically, the observations $\B{y}_i \in \R^{n_{\rm obs}}$ of the physical state are taken at times $t_i$, $1 \leq i \leq n_t$ as 
\begin{align}
    \B{y}_i = \MB{O}(\B{x}^{\rm true}_i) + \B\varepsilon^{\rm obs}_i, \text{ where } \MB{O} : \R^n \to \R^{n_{\rm obs}} \text{ and } n_{\rm obs} \ll n.
\end{align}
Here, $\MB{O}$ is the observation operator and $\B\varepsilon_i^{\rm obs}$ represents observation error.
As is the typical case, observation errors at different times are assumed to be conditionally independent~\cite[]{evensen2022data}.
From a Bayesian standpoint, we define the posterior distribution given the observational likelihood, model evolution likelihood, and the prior as:
\begin{equation}
    \pi(\B{x}_0 | \B{y}_{1}, \dots,\B{y}_{n_t} ) \propto \pi(\B{x}_0) \left( \prod_{i=1}^{n_t} \pi(\B{y}_i | \B{x}_i ) \pi(\B{x}_i | \B{x}_{i-1} ) \right) .
\end{equation}

SC-4DVAR makes certain assumptions about the model, prior, and observational likelihoods.
The first assumption is that the prior and likelihoods are normally distributed, i.e., $\B{x}_0 \sim \mc{N}(\B{x}^b_0, \B\gampr)$ and $\B\varepsilon^{\rm obs}_i \sim \mc{N}(\B{0}, \B{R}_i)$ for $1 \le i \le n_t$ and the measurements are uncorrelated in time. 
The next assumption is that of a deterministic model with no model error, implying that $\pi(\B{x}_i | \B{x}_{i-1} ) = 1$.
With these simplifying assumptions, the posterior distribution takes the form 
\begin{equation}
\begin{split}
     \pi(\B{x}_0 | \B{y}_{1}, \dots,\B{y}_{n_t}) &\propto \exp{ \left( -\frac{1}{2} \left(\B{x}_0 - \B{x}^b_0\right)^{\top} \B\gampr^{-1} \left(\B{x}_0 - \B{x}^b_0\right) \right) }\\
     &\times \exp{\left(-\frac{1}{2} \sum_{i=1}^{n_t}  \left( \mc{O}(\B{x}_i) - \B{y}_i\right)^{\top} \B{R}_i^{-1} \left( \mc{O}(\B{x}_i) - \B{y}_i\right) \right)}.
\end{split}
\end{equation}
SC-4DVAR computes the maximum a posteriori (MAP) estimate $\B{x}^{a}_0$ by solving the following optimization problem:
\begin{align}\label{eqn:4dvar}
    \B{x}^a_0 =  \argmin_{\B{x}_0\in\mb{R}^n}\,\, \mc{J}(\B{x}_0) \quad \text{subject to: }\,\,\B{x}_i = \MB{M}_{t_0 \rightarrow t_i} (\B{x}_0)\,,\quad  1 \leq i \leq n_t,
\end{align}
where the objective function is defined to be the negative logarithm of the posterior distribution (ignoring the proportionality constant, which does not affect the optimal point $\B{x}_0^a$)
\begin{align}\label{eqn:4dvar_cf}
    \mc{J}(\B{x}_0) = - \log \pi(\B{x}_0 | \B{y}_{1}, \dots,\B{y}_{n_t}) &= \frac{1}{2}  \left(\B{x}_0 - \B{x}^b_0\right)^{\top} \B\gampr^{-1} \left(\B{x}_0 - \B{x}^b_0\right) \nonumber \\
     &+ \frac{1}{2} \sum_{i=1}^{n_t} \left( \mc{O}(\B{x}_i) - \B{y}_i\right)^{\top} \B{R}_i^{-1} \left( \mc{O}(\B{x}_i) - \B{y}_i\right).
\end{align}
The first term in \cref{eqn:4dvar_cf} quantifies the departure of the solution from the background state $\B{x}^b_0$ and the second term measures the mismatch between the forecast trajectory and observations in the assimilation window. 
The minimizer of \cref{eqn:4dvar} is typically computed iteratively using gradient-based numerical optimization methods. 
First-order adjoint models provide the gradient of the cost function \cref{eqn:4dvar_cf}, while second-order adjoint models provide the Hessian-vector product (e.g., for Newton-type methods). 
The methodology for building and using various adjoint models for optimization and sensitivity analysis is discussed in \cite{sandu2005adjoint,Sandu_PA2006a,cioaca2012second}.
This paper considers the approach in \cite{Sandu_PA2006a} for constructing discrete Runge-Kutta adjoints.
\subsection{Newton-based approaches}
In this work, we focus on the inexact or truncated GN method~\cite[]{nocedal1999numerical,chen2011gnhess} for two main reasons \begin{enumerate*}[label={(\roman*)}]
    \item it is cheaper than the full Newton method as it does not require the computation of second-order derivatives of the model,
    \item it is better suited for small residual problems and is robust to the initial guess for the optimization problem in \eqref{eqn:4dvar}. 
\end{enumerate*}
The gradient of \cref{eqn:4dvar_cf} can be written as 
\begin{equation}\label{eqn:4dvar_gradient}
    \nabla_{\B{x}_0} \mc{J}(\B{x}_0) = \displaystyle \B\gampr^{-1} \left(\B{x}_0 - \B{x}^b_0\right) + \sum_{i=1}^{n_t} \B{M}\t_{i}(\B{x}_0) \B{O}\t_i(\B{x}_i)  \B{R}_i^{-1} \left( \MB{O}(\B{x}_i) - \B{y}_i\right)\,,
\end{equation}
where $\B{M}\t_{i}(\B{x}_0) = \left. \frac{\partial \MB{M}\t_{t_0 \rightarrow t_i}}{\partial \B{x}}  \right|_{\B{x}_0}$ and $\B{O}\t_i(\B{x}_i) = \left.\frac{\partial\mc{O}\t}{\partial \B{x}}\right|_{\B{x}_i}$ are the adjoints (ADJs) of the model and observation operators, respectively. 
The GN Hessian of \cref{eqn:4dvar_cf} with respect to $\B{x}_0$ is 
\begin{align}\label{eqn:4dvar_hessian}
    \MB{H}(\B{x}_0) = & \> \B\gampr^{-1}  + \sum_{i=1}^{n_t} \B{M}\t_{i}(\B{x}_0) \B{O}\t_i(\B{x}_i) \B{R}_i^{-1}  \B{O}_i(\B{x}_i) \B{M}_i(\B{x}_0) \,.
\end{align}
where $\B{M}_{i}(\B{x}_0) = \left. \frac{\partial \MB{M}_{t_0 \rightarrow t_i}}{\partial \B{x}}  \right|_{\B{x}_0}$ and $\B{O}_i(\B{x}_i) = \left.\frac{\partial\mc{O}}{\partial \B{x}}\right|_{\B{x}_i}$ are the tangent linear models (TLMs) of the model and observation operators respectively. 
Note that the GN Hessian can be factorized as 
\begin{equation}\label{eqn:Rhalf}
\begin{split}
    \MB{H}(\B{x}_0) &= \B\gampr^{-1/2} \left( \B{I} + \B{A}\t(\B{x}_0)\B{A}(\B{x}_0) \right) \B\gampr^{-1/2} = \B\gampr^{-1/2} \left( \B{I} + \B{H}(\B{x}_0) \right) \B\gampr^{-1/2}, \\
    \B{A}(\B{x}_0) &= \bmat{
    \B{R}_1^{-1/2}\B{O}_1(\B{x}_1)\B{M}_1(\B{x}_0) \B\gampr^{1/2} \\ 
    \B{R}_2^{-1/2}\B{O}_2(\B{x}_2)\B{M}_2(\B{x}_0) \B\gampr^{1/2}\\ 
    \vdots \\
    \B{R}_{n_t}^{-1/2} \B{O}_N(\B{x}_N)\B{M}_{n_t}(\B{x}_0) \B\gampr^{1/2}} \in \R^{m\times n}, \quad m = n_{\rm obs} n_t,
\end{split} 
\end{equation}
where $\B{H}(\B{x}_0) = \B{A}\t(\B{x}_0)\B{A}(\B{x}_0)$, is the data misfit part of the GN Hessian. 
As stated in the introduction, the $\B{H}(\B{x}_0)$ term exhibits a rapid decay in its eigenvalues; see e.g.,~\cite{flath2011fast, Tanbui_2013,spantini2015optimal,cui2014likelihood}).
This is also discussed in \Cref{ssec:analysis_sum}.

An evaluation of the gradient involves a sequential computation of the full model forward in time and the adjoint model backward in time, while an evaluation of the GN Hessian requires a solution of the full model and its tangent linear model forward in time, along with the adjoint model backward in time. 
The initial state at the $k$-th GN iteration is denoted by $\B{x}_0^{(k)}$, and its corresponding descent direction by $\delta\B{x}^{(k)}$.
This descent direction requires the solution of the GN linear system 
\begin{equation}\label{eqn:gniter} 
    \MB{H}^{(k)} \delta\B{x}^{(k)} = - \B{g}^{(k)},
\end{equation}
where $\MB{H}^{(k)} = \MB{H}(\B{x}_0^{(k)})$, and $\B{g}^{(k)} = \nabla_{\B{x}_0} \mc{J}(\B{x}_0^{(k)})$. 
Using the factorization from \cref{eqn:Rhalf} in \cref{eqn:gniter} gives a prior-preconditioned (or first-level preconditioned ~\cite{Tshimanga_2008,Fisher_2009}) linear system for 
\begin{equation}\label{eqn:gniterf1} 
     \left( \B{I} + \B{H}^{(k)} \right) \B\gampr^{-1/2} \delta\B{x}^{(k)} = - \B\gampr^{1/2} \B{g}^{(k)},
\end{equation}
where $\B{H}^{(k)} = \B{H}(\B{x}_0^{(k)})$.

As the following discussion is for one GN iteration, we omit the superscript $(k)$ in the notation.
Assume the existence of an approximate low-rank eigenvalue decomposition of $\B{H} \approx \Bh{H} = \Bh{V} \Bh{\Lambda} \Bh{V}\t$. 
Applying the Woodbury identity gives    
\begin{equation}\label{eqn:woodbury}
    (\B{I} + \Bh{H} )^{-1} = (\B{I} + \Bh{V} \Bh{\Lambda} \Bh{V}\t )^{-1} = \left( \B{I} - \Bh{V}({\Bh{\Lambda}}^{-1} + \B{I})^{-1}\Bh{V}\t \right),
\end{equation}
which only requires the inverse of a diagonal matrix $({\Bh{\Lambda}}^{-1} + \B{I})^{-1}$. 
The approximation $(\B{I} + \B{H} )^{-1} \approx \left( \B{I} - \Bh{V}({\Bh{\Lambda}}^{-1} + \B{I})^{-1}\Bh{V}\t \right)$, can be used in two ways. 
\subsubsection{SketchSolv}
\label{subsubsec:sksolv}
Here, the GN step direction is obtained by directly applying the $(\B{I} + \Bh{H})^{-1}$ from \cref{eqn:woodbury} to \cref{eqn:gniterf1} at the $k$-th iterate as 
\begin{equation}\label{eqn:dxsol}
    \delta\Bh{x}  = - \B\gampr^{1/2} \left( \B{I} - \Bh{V}({\Bh{\Lambda}}^{-1} + \B{I})^{-1}\Bh{V}\t \right) \B\gampr^{1/2}\B{g}.
\end{equation}
Like inexact GN, the optimization proceeds by using an inexact GN step that can be computed more efficiently than the exact GN step. 
Combining the convergence analysis of inexact GN with the guarantees using the SketchSolv approach is possible, but we did not pursue this analysis (ideas explored in \cite{Gratton2007}).
\subsubsection{SketchPrec}
\label{subsubsec:skprec}
In this approach, the system in \cref{eqn:gniterf1} is solved using the {conjugate gradient} method as 
\begin{equation}\label{eqn:pcgsystem}
    \left( \B{I} + \B{H} \right) \delta \Bt{x} = - \B\gampr^{1/2} \B{g}, \text{ and } \delta \B{x} = \B\gampr^{1/2} \delta \Bt{x}. 
\end{equation}
The conjugate gradient method for \cref{eqn:pcgsystem} is preconditioned by $(\B{I} + \Bh{H})^{-1}$ from \cref{eqn:woodbury}. {This approximation is known as a limited memory preconditioner and has been used to solve the {SC-4DVar problem}~\cite[]{Tshimanga_2008, gratton2011reduced}.}

\subsubsection{Low-rank approximations}
{
We now address how to efficiently obtain a low-rank approximation $\B{H} \approx \Bh{H} = \Bh{V} \Bh{\Lambda} \Bh{V}\t$.
Past work has focused on the Rayleigh-Ritz approaches (including the Lanczos method)~\cite[]{Tshimanga_2008,Fisher_2009}, both of which are inherently serial and allow for no parallelization.
Another method is a variant of the Rayleigh-Ritz where the eigenvalue decomposition (via the Lanczos algorithm) is replaced by a randomized eigenvalue decomposition~\cite[]{scott2022,dauvzickaite2021randomised}.
%
%
We present randomized algorithms that allow for parallelism in the TLM and ADJ computations and adaptivity in the sketch sizes.}
\section{Methodology}
\label{sec:methodology}
In this section, we discuss randomized methods for creating $\Bh{H}$ in \Cref{ssec:sketching}, and its application to \Cref{subsubsec:skprec,subsubsec:sksolv} in \Cref{ssec:applopt}.

\subsection{Sketching the GN Hessian}\label{ssec:sketching}
{
As in the previous section, we omit the superscript $(k)$ from the notation as this discussion consists of a single GN iteration. 
Let the current iterate be denoted as $\B{x} \in \R^n$ and define the preconditioned forward operator $\B{A}$, its adjoint $\B{A}\t$ have the same meaning as before. 
The randomized methods rely on the application of $\B{A}$ and $\B{A}\t$ to a vector, without access to the full matrix itself. 
Throughout this section, we will assume that the random matrices are standard Gaussian, meaning that the entries are independent and identically distributed random variables drawn from the Gaussian distribution with zero mean and unit variance. 
%
%
All three methods presented here---RandSVD, Nystr\"om, and SingleView---produce a low-rank eigenvalue decomposition of the form $\widehat{\B{H}} = \Bh{V}\Bh{\Lambda}\Bh{V}\t$, where $\Bh{V} \in \R^{n \times \ell}$ has orthonormal columns, and  $\Bh{\Lambda} \in \R^{\ell \times \ell}$ has nonnegative diagonal entries.
The choice of $\ell$ can be \textit{fixed} or \textit{adaptive} based on the specific application.
As $\B{H}$ will have a spectrum with many small eigenvalues, $\ell \ll m$ can be reasonably small (further discussed in \Cref{ssec:analysis_sum}). 
}

\subsubsection{Fixed-rank sketching}
{In this section, we discuss methods to sketch $\Bh{H}$, where the sketch parameter $\ell$ is fixed.}
\paragraph{Randomized SVD:} 
This method computes a low-rank approximation of the preconditioned forward operator $\B{A}$ using the randomized SVD approach in~\cite{halko2011finding}. 
First, we draw a random matrix $\B\Omega \in \mb{R}^{n\times \ell}$. 
Next, we compute the sketch $\B{Y} = \B{A\Omega}$; the key insight is that if $\B{A}$ is low-rank, then the columns of $\B{Y}$ approximate the range of $\B{A}$. 
To obtain a low-rank approximation, we consider the thin-QR factorization $\B{Y} = \B{QR}$, then obtain the low-rank approximation $\B{A}\approx \B{QQ}^\top \B{A}$; similarly, we also have the low-rank approximation 
\begin{equation} 
    \B{H} = \B{A}^\top \B{A} \approx \Bh{H} \equiv \B{A}^\top \B{QQ}^\top \B{A} = \B{W}\t \B{W}
\end{equation}
where $\B{W}\t = \B{A}\t\B{Q}$.
This approach requires $\ell$ TLM applications followed by $\ell$ ADJ applications---both of which can be computed in parallel. 
The computation of the QR factorization is $\mc{O}(mr^2)$ floating point operations (flops), and there is an additional postprocessing cost $\mc{O}(nr^2 + r^3)$ flops to convert to SVD format. {Here, $r$ is the target rank and $\ell = r + p$, with $p$ being the oversampling parameter}. 
The details of this approach are given in \Cref{alg:randsvd}.
\begin{algorithm}[!ht]
    \begin{algorithmic}[1]
        \REQUIRE Operator: $\B{A}\in\R^{m\times n}$, sketch parameter: $\ell$.
        \STATE Draw Gaussian random matrix $\B\Omega \in \mb{R}^{n\times \ell}$.
        \STATE Compute forward pass $\B{Y} = \B{A\Omega}$. \COMMENT{Compute, if possible, in parallel}
        \STATE Compute thin-QR factorization $\B{Y} = \B{QR}$. 
        \STATE Compute $\B{W}\t = \B{A}\t\B{Q}$. \COMMENT{Compute, if possible, in parallel}
        \STATE Compute thin-SVD $\B{W} = \Bh{U}\Bh{\Sigma}\Bh{V}\t $, $\Bh{\Lambda} = \Bh\Sigma^2$.
         \RETURN Matrices $\Bh{V}, \Bh{\Lambda}$ that defines the approximation $\Bh{H} = \Bh{V}\Bh{\Lambda}\Bh{V}\t$.
        \caption{RandSVD GN Hessian Approximation}
    \label{alg:randsvd}
    \end{algorithmic}
\end{algorithm}

\begin{remark}\label{rem:thinsvdavoid}
    We may avoid thin-SVDs of $\B{W}$ by using $\Bh{H} = \B{W}\t\B{W}$, but this requires a solution of  linear system of size $\ell \times \ell$ as the Woodbury identity gives 
    \begin{equation}\label{eqn:randsvdsv}
        (\B{I} + \Bh{H})^{-1} = \B{I} - \B{W}\t (\B{I}_{\ell} + \B{W} \B{W}\t)^{-1} \B{W}.
    \end{equation}
    $(\B{I}_{\ell} + \B{W} \B{W}\t)^{-1}$ requires a Cholesky decomposition ($\mathcal{O}(r^3)$) which should be cheaper than a thin-SVD ($\mathcal{O}(nr^2)$).
\end{remark}

\paragraph{Nystr\"om:}
In this approach, we first form the sketch of the prior preconditioned GN Hessian, $\B{H}$, by drawing a standard Gaussian random matrix $\B\Omega \in \R^{n\times \ell}$ and computing $\B{Y} = \B{H\Omega}$. 
Notice that it requires $\ell$ TLM followed by $\ell$ adjoint solves, which can be done in parallel. 
The Nystr\"om approach, see e.g., eq. (14.1) in ~\cite{martinsson2020randomized}, forms the following low-rank approximation to $\B{H}$ as 
\begin{equation}
    \B{H} \approx \B{H}_{\rm Nys} \equiv \B{Y} (\B\Omega\t\B{Y})^\dagger \B{Y}\t = \B{H\Omega}(\B\Omega\t\B{H\Omega})^\dagger (\B{H\Omega})\t,
\end{equation}
where $^\dagger$ represents the Moore-Penrose inverse. 
However, the expression implemented \textit{as is} has numerical instability issues, so we follow the approach in Algorithm 3 from \cite{tropp2017fixed} -- which is reproduced in \Cref{alg:nystrom}. 
The computational cost of the Nystr\"om approach is similar to RandSVD; namely, $\ell$ TLM followed by $\ell$ ADJ solves followed by $\mc{O}(r^2n + r^3)$ flops for post-processing. 
 
\begin{algorithm}[!ht]
    \begin{algorithmic}[1]
    \REQUIRE Operator: $\B{H} = \B{A}\t\B{A} \in \R^{n\times n}$, sketch parameter: $\ell$. 
        \STATE Draw Gaussian random matrix $\B\Omega \in \mb{R}^{n\times \ell}$.
        \STATE Compute $\B{Y} = \B{H\Omega}$. \COMMENT{Compute, if possible, in parallel}
        \STATE Compute $\nu = \sqrt{n}\epsilon_{\rm mach}\|\B{Y}\|_2$ where $\epsilon_{\rm mach}$ is machine epsilon.
        \STATE $\B{Y} \leftarrow\B{Y} + \nu \B\Omega$.
        \STATE Compute (lower) Cholesky triangular factor $\B{L}$ of $(\B{C}+\B{C}\t)/2$, where $\B{C} = \B\Omega\t\B{Y}$.
        \STATE {Compute $\B{L}^{-1} \B{Y}\t = \B{W}$.
        \STATE Compute the thin-SVD (truncated to rank r) of $\B{W}= \Bh{U}\Bh\Sigma\Bh{V}\t$.}
        \RETURN Matrices $\Bh{V}, \Bh{\Lambda} = \max\{0,\Bh\Sigma^2 {-\nu\B{I}}\}$ that defines the approximation $\Bh{H} = \Bh{V}\Bh{\Lambda}\Bh{V}\t$.
    \end{algorithmic}
    \caption{Nystr\"om GN Hessian Approximation}
    \label{alg:nystrom}
\end{algorithm}

\paragraph{Single View:}

This approach is based on the Single View randomized algorithm in~\cite{tropp2017practical}. In this approach, we draw two standard Gaussian random matrices $\B\Omega \in \mb{R}^{n\times \ell_1}$, $\B\Psi \in \mb{R}^{m\times \ell_2}$; the sketch parameters $\ell_1,\ell_2$ will be discussed shortly. 
As in RandSVD, we compute the sketch $\B{Y} = \B{A\Omega}$ and its thin-QR factorization $\B{Y} = \B{Q}_Y\B{R}_Y$. 
The SingleView method maintains a separate sketch for the row space $\B{Z} = \B{A}\t\B\Psi$ and constructs the low-rank approximation 
\begin{equation}
    \label{eqn:singleviewlr}
    \B{A} \approx \Bh{A}_{SV} \equiv \B{Q}_Y (\B\Psi\t\B{Q}_Y)^\dagger \B{Z}\t.
\end{equation}
A main advantage of this approach is that both the sketches $\B{Y}$ and $\B{Z}$ can be computed independently and in parallel. 
To understand this approach, it is worth noting that the SingleView approximation can be expressed as $\Bh{A}_{SV} \equiv \B{P}_{SV}\B{A}$, where $\B{P}_{SV} = \B{Q}_Y (\B\Psi\t\B{Q}_Y)^\dagger \B{\Psi}\t$ is an oblique projector with $\range(\B{P}_{SV})\subset \range(\B{Q}_Y)$, {where $\range(\cdot)$ denotes the range of a matrix}. 
In contrast, the RandSVD approach uses an orthogonal projector onto $\range(\B{Q}_Y)$. 
We now discuss an efficient implementation of the low-rank approximation. 

Compute $\B{M} = \B\Psi\t\B{Q}_Y$ and its thin-QR factorization $\B{M}= \B{Q}_M\B{R}_M$; its pseudoinverse can be computed as $\B{M}^\dagger = \B{R}_M^\dagger \B{Q}_M\t$. 
Now, we compute $\B{W} = \B{R}_M^\dagger (\B{ZQ}_M)\t$ and its thin-SVD $\B{W} = \B{U}_W\B\Sigma\Bh{V}\t$. 
In summary, so far we have $\B{A}_{SV} = (\B{Q}_Y \B{U}_W) \B\Sigma \Bh{V}\t$. 
The approximation of the Gram product is now straightforward
\begin{equation*}
    \B{H} \approx \Bh{H}_{SV} \equiv \B{A}_{SV}\t\B{A}_{SV} = \Bh{V}\Bh\Lambda\Bh{V}\t,   
\end{equation*}
where $\Bh\Lambda = \B\Sigma^2$. The details of this approach are given in Algorithm~\ref{alg:singleview}.

Because the oblique projector introduces some numerical issues, a standard approach is to use additional oversampling. 
Based on eq.\ (4.6) with $\alpha = 1$ from \cite{tropp2017practical}, one recommendation is to use $\ell_1 = 2r + 1$ and $\ell_2 =  2\ell_1 + 1$, where $r$ is the target rank. 
In our numerical experiments, we take $\ell_1$ to be equal to $\ell$ as in the previously discussed RandSVD and Nystr\"om methods to ensure all the methods have comparable cost to sketch, specifically, the same number of TLM vector products.
Note that this approach requires more ADJ solves (and TLM solves if $\ell_1 > \ell$) than the previously discussed methods, but the key point here is that these ADJ and TLM solves can each be performed in parallel, thus exposing an additional layer of parallelism.
\begin{algorithm}[!ht]
    \begin{algorithmic}[1]
        \REQUIRE Operator: $\B{A}\in\R^{m\times n}$, sketch parameters: $\ell_1, \ell_2$. 
        \STATE Draw Gaussian random matrices $\B\Omega \in \mb{R}^{n\times \ell_1}$, $\B\Psi \in \mb{R}^{m\times \ell_2}$.
        \STATE Compute sketches $\B{Y} = \B{A\Omega}$ and $\B{Z} = \B{A}\t \B\Psi$ .\COMMENT{Compute, if possible, in parallel}
        \STATE Compute thin-QR factorization of $\B{Y} = \B{Q}_Y\B{R}_Y$.
        \STATE Compute thin-QR factorization of $\B{M} = \B{\Psi}\t\B{Q}_Y = \B{Q}_M\B{R}_M$.
        \STATE Compute $\B{W} = \B{R}_M^\dagger (\B{ZQ}_M)\t $ and its thin-SVD $\B{W}=\Bh{U} \Bh\Sigma \Bh{V}\t$.
        \RETURN Matrices $\Bh{V}, \Bh{\Lambda} = \B\Sigma^2$ that defines the approximation $\Bh{H}_{SV} = \Bh{V}\Bh{\Lambda}\Bh{V}\t$.
    \end{algorithmic}
    \caption{SingleView GN Hessian Approximation}
    \label{alg:singleview}
\end{algorithm}

\subsubsection{Adaptive sketching}
\label{subsubsec:adap}
In many applications, the spectrum of $\B{H}$, can vary across the iterates $(k)$, and the choice of a fixed $\ell$ may be inappropriate. 
To this end, we design a methodology to adaptively choose the sketch size $\ell$ based on the condition number of the GN Hessian in \cref{eqn:gniter}. 
We use the following quantity $\kappa_{\rm sk}$ as an estimate for the accuracy of $\Bh{H}$
\begin{equation}\label{eqn:condtext}
\begin{split}
    \kappa_{\rm sk} &= \lVert (\B{I} + \B{H}) (\B{I} + \Bh{H})^{-1} \B{v} \lVert_2 
\end{split}
\end{equation}
where $\B{v} = \B{w}/\|\B{w}\|_2$  and $\B{w} \sim \mc{N}(\B{0},\B{I})$,  is a normal random test vector. 
In \Cref{ssec:cdest}, we show that $\kappa_{\rm sk}$ can be viewed as an estimator for the 2-norm condition number of $(\B{I} + \B{H}) (\B{I} + \Bh{H})^{-1}$. 
To compute the inverse of $\B{I}+\Bh{H}$, we use the Woodbury formula as in~\eqref{eqn:woodbury}.

The idea behind the adaptive sketching approach is 
\begin{enumerate*}[label={(\roman*)}]
    \item to construct  a sketch $\Bh{H}$ with an initial choice of $\ell$;
    \item while $\kappa_{\rm sk} > \epsilon_{\rm sk}$, recompute a new sketch with a larger $\ell$; 
    Here $\epsilon_{\rm sk} > 1$ is a user-specified parameter that is used to estimate the condition number. 
\end{enumerate*}
All three randomized methods can be modified to adaptively estimate the sketch size.
The details for adapting RandSVD are provided in~\Cref{alg:arandsvd}; the implementation for Nystr\"om is similar and given in \cite{frangella2021randomized}.
The adaptive algorithm for SingleView is provided in~\Cref{alg:asingleview}.

\begin{algorithm}[!ht]
    \begin{algorithmic}[1]
        \REQUIRE Operator: $\B{A}\in\R^{m\times n}$, sketch parameters: $\ell$, $\hat{\ell}$, {tolerance: $\epsilon_{\rm sk} > 0$.}
        \STATE Draw Gaussian random matrix $\B\Omega \in \mb{R}^{n\times \ell}$.
        \STATE Compute forward pass $\B{Y} = \B{A\Omega}$. \COMMENT{Compute, if possible, in parallel}
        \STATE Compute thin-QR factorization $\B{Y} = \Bh{Q}\B{R}$. 
        \STATE Compute $\B{W}\t = \B{A}\t\Bh{Q}$. \COMMENT{Compute, if possible, in parallel}
        \STATE Initialize the empty matrix as $\B{Q} = \begin{bmatrix}
            &
        \end{bmatrix}$.
        \STATE {Calculate $\kappa_{\rm sk}$ using \cref{eqn:condtext,eqn:randsvdsv}.} 
        \WHILE {$\kappa_{\rm sk} > \epsilon_{\rm sk}$} 
            \STATE Append to $\B{Q} \leftarrow \bmat{\B{Q} & \Bh{Q}}$.
            \STATE Draw Gaussian random matrix $\Bh{\Omega} \in \mb{R}^{n\times \hat{\ell}}$.
            \STATE Compute forward pass $\B{Y} = \B{A}\Bh{\Omega}$. \COMMENT{Compute, if possible, in parallel}
            \STATE Orthogonalize $\B{Y} = \B{Y} - \B{Q}(\B{Q}^\top \B{Y})$. \COMMENT{Additional Gram-Schmidt orthogonalization steps, if necessary}
            \STATE Compute thin-QR factorization $\B{Y} = \Bh{Q}\B{R}$.
            \STATE Compute $\Bh{W}\t = \B{A}\t \Bh{Q}$. \COMMENT{Compute, if possible, in parallel}
            \STATE Update $\ell \leftarrow \ell + \hat{\ell}$, $\B{W} \leftarrow \bmat{\B{W}\t & \Bh{W}\t}\t$.
            \STATE {Calculate $\kappa_{\rm sk}$ using \cref{eqn:condtext,eqn:randsvdsv}.}
        \ENDWHILE
        \STATE Compute thin-SVD $\B{W} = \Bh{U}\Bh{\Sigma}\Bh{V}\t $, $\Bh{\Lambda} = \Bh\Sigma^2$.
         \RETURN Matrices $\Bh{V}, \Bh{\Lambda}$ that defines the approximation $\Bh{H} = \Bh{V}\Bh{\Lambda}\Bh{V}\t$.
        \caption{Adaptive RandSVD GN Hessian Approximation}
    \label{alg:arandsvd}
    \end{algorithmic}
\end{algorithm}

\begin{algorithm}[!ht]
    \begin{algorithmic}[1]
        \REQUIRE Operator: $\B{A}\in\R^{m\times n}$, sketch parameters: $\ell_1, \ell_2, \hat{\ell}_1, \hat{\ell}_2${, tolerance: $\epsilon_{\rm sk} > 0$} 
        \STATE Draw Gaussian random matrices $\B\Omega \in \mb{R}^{n\times \ell_1}$, $\B\Psi \in \mb{R}^{m\times \ell_2}$.
        \STATE Compute sketches $\B{Y} = \B{A\Omega}$ and $\B{Z} = \B{A}\t \B\Psi$.\COMMENT{Compute, if possible, in parallel}
        \STATE Compute thin-QR factorization of $\B{Y} = \B{Q}_Y\B{R}_Y$.
        \STATE Compute thin-QR factorization of $\B{M} = \B{\Psi}\t\Bh{Q}_Y = \B{Q}_M\B{R}_M$.
        \STATE Compute $\B{W} = \B{R}_M^\dagger (\B{ZQ}_M)\t $.
        \STATE {Calculate $\kappa_{\rm sk}$ using \cref{eqn:condtext,eqn:randsvdsv}.} 
        \WHILE {$\kappa_{\rm sk} > \epsilon_{\rm sk}$} 
            \STATE Draw Gaussian random matrices $\Bh{\Omega} \in \mb{R}^{n\times \hat{\ell}_1}$, $\Bh{\Psi} \in \mb{R}^{m\times \hat{\ell}_2}$.
            \STATE Compute forward pass $\B{Y} = \B{A}\Bh{\Omega}$. \COMMENT{Compute, if possible, in parallel}
            \STATE Orthogonalize $\B{Y} = \B{Y} - \B{Q}_Y(\B{Q}^\top_Y \B{Y})$. \COMMENT{Additional Gram-Schmidt orthogonalization steps.}
            \STATE Compute thin-QR factorization $\B{Y} = \Bh{Q}_Y\Bh{R}_Y$.
            \STATE {Compute thin-QR factorization of $\begin{bmatrix}
                \B\Psi\t \\ \Bh\Psi\t
            \end{bmatrix} \begin{bmatrix}
                \B{Q}_Y & \Bh{Q}_Y
            \end{bmatrix} = \B{Q}_M \B{R}_M$ directly or via \cref{alg:asingleviewqrupdate} since the QR-decomposition of $\B\Psi\t \B{Q}_Y$ is known.}
            \STATE Compute $\Bh{W}\t = \B{A}\t \Bh{Q}$. \COMMENT{Compute, if possible, in parallel}
            \STATE Compute $\B{W} = \B{R}_M^\dagger (\B{ZQ}_M)\t $. 
            \STATE Update some variables. $\ell \leftarrow \ell + \hat{\ell}$, $\B{W} \leftarrow \bmat{\B{W}\t & \Bh{W}\t}\t$.
            \STATE {Calculate $\kappa_{\rm sk}$ using \cref{eqn:condtext,eqn:randsvdsv}.} 
        \ENDWHILE
        \STATE Compute thin-SVD $\B{W} = \Bh{U}\Bh{\Sigma}\Bh{V}\t $, $\Bh{\Lambda} = \Bh\Sigma^2$.
         \RETURN Matrices $\Bh{V}, \Bh{\Lambda}$ that defines the approximation $\Bh{H} = \Bh{V}\Bh{\Lambda}\Bh{V}\t$.
        \caption{Adaptive SingleView GN Hessian Approximation}
    \label{alg:asingleview}
    \end{algorithmic}
\end{algorithm}

\begin{algorithm}[!ht]
    \begin{algorithmic}[1]
        \REQUIRE {Matrices: $\begin{bmatrix}
            \B{A}_1 \\ \B{A}_2
        \end{bmatrix}$, $\begin{bmatrix}
            \B{B}_1 & \B{B}_2
        \end{bmatrix}$ where $\B{B}_1, \B{B}_2$ are orthogonal, and $\B{Q}_1 \B{R}_1 = \B{A}_1 \B{B}_1$.} 
        {
        \STATE Compute thin-QR of $\B{A}_2 \B{B}_1 = \B{Q}_2 \B{R}_2$.
        \STATE Compute thin-QR of $\begin{bmatrix}
            \B{R}_1 \\ \B{R}_2
        \end{bmatrix} = \B{Q}_3 \B{R}_3$.
        \STATE Reset $\B{Q}_3 = \begin{bmatrix}
            \B{Q}_1 & \B{0} \\ \B{0} & \B{Q}_2
        \end{bmatrix} \B{Q}_3$, giving the thin-QR decomposition $\begin{bmatrix}
            \B{A}_1 \\ \B{A}_2
        \end{bmatrix} \B{B}_1 = \B{Q}_3 \B{R}_3$. 
        \STATE Compute $\B{P}_1 = \begin{bmatrix}
            \B{A}_1 \\ \B{A}_2
        \end{bmatrix} \B{B}_2$.
        \STATE Orthogonalize $\B{P}_2 = \B{P}_1 - \B{Q}_3 ( \B{Q}\t_3 \B{P}_1)$.
        \STATE Compute thin-QR of $\B{P}_2 = \B{Q}_4 \B{R}_4$.
        \STATE Compute matrices $\B{Q}_5 = \begin{bmatrix}
            \B{Q}_3 & \B{Q}_4
        \end{bmatrix}$ and $\B{R}_5 = \begin{bmatrix}
            \B{R}_3 & \B{Q}\t_3\B{P}_1\\ \B{0} & \B{R}_4
        \end{bmatrix}$.}
        \RETURN {Matrices $\B{Q}_5, \B{R}_5$ where $ \B{Q}_5 \B{R}_5 = \begin{bmatrix}
            \B{A}_1 \\ \B{A}_2
        \end{bmatrix} \begin{bmatrix}
            \B{B}_1 & \B{B}_2
        \end{bmatrix}$.}
    \caption{{QR Update for adaptive SingleView}}
    \label{alg:asingleviewqrupdate}
    \end{algorithmic}
\end{algorithm}

\subsubsection{Comparison of computational costs} 
We compare the proposed approaches with the traditional Lanczos method in terms of the computational costs, storage, and scope for parallelism. 
We assume that each TLM and ADJ run costs the same.
In Table~\ref{tab:compcosts}, we summarize the computational costs, and in reporting the costs we assume that $\ell, \ell_1,\ell_2 = \mc{O}(r)$.
As can be seen, in a serial implementation, the computational costs of RandSVD and Nystr\"om are very similar. 
Furthermore, SingleView is the most expensive of the three algorithms, although it is more parallelizable.

\begin{table}[!ht]
    \centering
    \begin{tabular}{c|c|c|c|c|c}
      Method & TLM & ADJ &  Additional (flops) &  Storage &  PPE ($\%$)\\ \hline
      Lanczos & $\ell$ & $\ell$ & $\mc{O}(r^3 + r^2n)$ & $n\ell + 3\ell$ & 0\\
      RandSVD & $\ell$ & $\ell$ & $\mc{O}(r^3 + r^2 (m+n) )$ & $2n\ell$ &  $50$ \\
      Nystr\"om & $\ell$ & $\ell$ & $\mc{O}(r^3 + r^2n)$ & $2n\ell$ &  $50$ \\
      SingleView & $\ell_1 $ & $\ell_2$ & $\mc{O}(r^3  + r^2 (m+n) )$  &  $2n\ell_1$ & $100$ \\
    \end{tabular}
    \caption{Comparison of computational costs for creating a sketch $\Bh{H}$. All the methods apply the square root of the prior ($\B{\gampr}^{1/2}$) $2 \ell$ times. PPE refers to peak parallel efficiency. The cost for the Lanczos method assumed full reorthogonalization.}
    \label{tab:compcosts}
\end{table}

Assume that we have a sufficiently large number of processors available. 
We define \textbf{peak parallel efficiency} (or PPE) as the percentage of the maximum number of PDE solves that can be run in parallel at any given time to the total number of PDE solves that need to be run. 
Then, for RandSVD, we can perform $\ell$ TLM solves in parallel (to compute $\B{Y} = \B{A\Omega}$) followed $\ell$ ADJ solves in parallel (to compute $\B{Z} = \B{A}\t\B{Q}$). 
This gives this method the PPE of $50\%$. 
The Nystr\"om approach similarly has a PPE of $50\%$. 
However, in the SingleView approach, both the TLM and ADJ solves can be done simultaneously in parallel, so it has a PPE of $100\%$. 
The cost analysis does not consider the cost of adaptivity because this cost is negligible.

There are other forms of parallelism in solving PDEs~\cite[]{Gander_2015_review,rao2016parallel} such as parallelism in the spatial and temporal dimensions that are not explored here but may further increase the gains from parallelism from our approach.

\subsection{Application to optimization}
\label{ssec:applopt}

In the most basic version, the sketch $\Bh{H}^{(k)}$ is computed at every GN iteration for both SketchSolv(as in~\Cref{subsubsec:sksolv}) and SketchPrec (as in~\Cref{subsubsec:skprec}).
While it looks as expensive as the traditional Lanczos method, the advantage of randomized sketching is the parallel evaluation of the expensive TLM and ADJ models. 
Additionally, randomization accelerates the overall convergence of GN in SketchPrec~\Cref{subsubsec:skprec} paradigm~\cite[Section 10.5]{martinsson2020randomized}.

One may also reuse the sketch $\Bh{H}^{(k)}$ across multiple GN iterations.
We do not consider reusing $\Bh{H}^{(k)}$ in the SketchSolv approach as the inexactness of the step direction may cause convergence issues.
However, reusing a previous sketch in the SketchPrec approach would only slow down the PCG convergence, without affecting the accuracy. 
But as the experiments will show, this cost is minimal compared to the cost of resketching $\Bh{H}^{(k)}$.

One needs a methodology to decide if a sketch $\B{H}^{(k - i)}$ must be reused or recomputed. 
Similar to~\cref{eqn:condtext}, we look at the conditioning of the linear system as in \cref{eqn:pcgsystem} using a sketch from a previous $(k - i)$-th GN iteration as 
\begin{equation}\label{eqn:condtext2}
    \kappa_{\rm re} = \lVert (\B{I} + \B{H}^{(k)}) (\B{I} + \Bh{H}^{(k - i)})^{-1} \B{v} \lVert_2
\end{equation}
where $0 \leq i < k$ where, as before, $\B{v} = \B{w}/\|\B{w}\|_2$ and $\B{w} \sim \mc{N}(\B{0},\B{I})$. 
The heuristic we propose is to reuse the sketch if  $\kappa_{\rm re} < \epsilon_{\rm re}$, where $\epsilon_{\rm re}$ is a user-specified tolerance.

One may consider sketch reuse with both fixed or adaptive sketch sizes.
In this work, SketchPrec considers either a fixed sketch size with no reuse or an adaptive sketch size with reuse (called SketchPrecA).
The two parameters $\kappa_{\rm sk}$ and $\kappa_{\rm re}$ are tuned as follows. 
Firstly, one must choose $1 \leq \epsilon_{\rm sk} < \epsilon_{\rm re}$, as the decision of sketch size must be stricter than the decision of reuse. 
This is because a ``more accurate'' (based on $\epsilon_{\rm sk}$) preconditioner from a previous GN iteration could still be somewhat adequate as a preconditioner (based on $\epsilon_{\rm re}$) in the current GN iteration, while a less accurate precondition may result in expensive resketching. 
Due to the quantities $\epsilon_{\rm sk}$ and $\epsilon_{\rm re}$ being representative of condition number, we typically choose $\epsilon_{\rm sk} \to 1^+$ to tend to 1 from the right, and $\epsilon_{\rm re} \approx 5 - 100$.
Ultimately, the choice of $\epsilon_{\rm sk}$ and $\epsilon_{\rm re}$ is problem and resource-specific and must be tuned accordingly.

%
%
%
%
%

\section{Analysis of the sketching methods}
\label{sec:analysis}

In this section, we provide error analysis for the previously proposed sketching methods. 
Our analysis applies only to a single step of the appropriate optimization algorithm and focuses on two different quantities:
\begin{enumerate}
    \item Error in the GN step: When the randomized sketch is used in SketchSolv, we bound the error in the GN step at the current iterate. 
    \item Condition number: When the randomized sketch is used in SketchPrec, we bound the 2-norm condition number of the preconditioned operator. 
    When this is used as a preconditioner for CG, this result can be combined with the analysis for the reduction in the error in the $\|\cdot\|_{\MB{H}}$-norm with iterations~\cite[Theorem 38.5]{trefethen1997numerical}. 
\end{enumerate}
In this section, we first set the notation for the analysis in \Cref{ssec:setup_analysis} and then summarize the results in \Cref{ssec:analysis_sum}. 
Then, we prove the results with structural and expectational analyses in \Cref{ssec:struct,ssec:prob} respectively.
In addition to the above, condition number estimates for adaptivity are discussed in \Cref{ssec:cdest}.

\subsection{Setup for the analysis}
\label{ssec:setup_analysis}

We first establish some notation. Recall that we have defined $\B{A} = \B{\gampr}^{1/2}$ and we assume it has the compact SVD $\B{A} = \B{U\Sigma V}^\top$.  
We assume that the number of measurements $m\leq n$.
The eigenvalues of the {data-misfit Hessian} $\B{H}$ are 
\begin{equation}
    \lambda_1 = \sigma_1^2, \cdots , \lambda_m = \sigma_m^2, \lambda_{m+1} = \dots =\lambda_{n} = 0.    
\end{equation}
Partition the SVD of $\B{A}$ as 
\begin{equation}
    \B{A} = \bmat{\B{U}_1 & \B{U}_2} \bmat{\B\Sigma_1 \\ & \B\Sigma_2} \bmat{\B{V}_1^\top \\ \B{V}_2^\top},    
\end{equation}
where $\B\Sigma_1 \in \mb{R}^{r\times r}$, $\B{U}_1 \in \mb{R}^{m\times r}$, and $\B{V}_1 \in \mb{R}^{n\times r}$. 
Here, $r \leq m$ is the target rank.

We will need the norm $\|\B{x}\|_{\B{M}} = \|\B{M}^{1/2}\B{x}\|_2$ which is a valid vector norm for any symmetric positive definite matrix $\B{M}$. 
When $\B{M}$ is symmetric positive semidefinite $\|\B{M}^{1/2}\|_2 = \|\B{M}\|_2^{1/2}$. 
This satisfies the inequality 
\begin{equation}
    \frac{\|\B{x}\|_{2}}{\sqrt{\|\B{M}^{-1}\|_2}} \leq \|\B{x}\|_{\B{M}} \leq \sqrt{\|\B{M}\|_2} \|\B{x}\|_2.    
\end{equation}
For an invertible matrix $\B{M}$, we define the 2-norm condition number for inversion as $\kappa_2(\B{M}) = \|\B{M}\|_2 \|\B{M}^{-1}\|_2$. 

We say two symmetric matrices $\B{C},\B{D} \in \R^{n\times n}$ satisfy $\B{C} \preceq \B{D}$ (equivalently, $\B{D} \succeq \B{C}$) if $\B{D} - \B{C}$ is positive semidefinite. 
For any $\B{X} \in \R^{m\times n}$, we have $\B{XCX}^\top \preceq \B{XDX}^\top$ and for any symmetric $\B{M} \in \R^{n\times n}$, $\B{C} + \B{M} \preceq \B{D} +\B{M}$. 
If additionally both $\B{C}, \B{D}$ are positive definite, then $\B{D}^{-1} \preceq \B{C}^{-1}$.

\subsection{Summary of main results}
\label{ssec:analysis_sum}

In this section, we assume that $\lambda_{r+1} \neq 0$. Let us define 
\begin{equation*}
    \text{id}_r \equiv \frac{\sum_{j>r} \lambda_j}{\lambda_{r+1}}
\end{equation*}
as the {\em intrinsic dimension} of the tail of the eigenvalues. 
This is also the stable rank of the matrix $\B\Sigma_2$. 
This quantity is important in our analysis.  

In many applications, the eigenvalues of the operator $\B{H}$ decay rapidly, so that the intrinsic dimension $\id_r$ is often bound independently of the dimensions of the operator. 
In analogy with the decay of the singular values of the Fredholm integral equation in deterministic inverse problems~\cite[Section 1.2.2]{hansen1998rank}, we consider two cases: 
\begin{enumerate}
    \item \textbf{Moderately ill-posed}: We say that the problem is moderately ill-posed if $\lambda_j = C_m j^{-\alpha}$ where $C_m$ is a constant, $\alpha > 1$. 
    Then the intrinsic dimension is bounded by 
    \begin{equation*}
        \id_r \leq \frac{1}{\lambda_{r+1}}\sum_{j>r}^\infty \lambda_j = \zeta(\alpha),
    \end{equation*}
    where $\zeta$ is the Riemann zeta function. 
    If $\alpha \leq 1$, the problem is said to be mildly ill-posed, which we do not consider, since the tail is not summable.
    \item \textbf{Severely ill-posed}: We say that the  problem is severely ill-posed if $\lambda_j = C_s \exp(-\alpha j)$ where $\alpha > 0$ and $C_s$ is a constant.
    \begin{equation*}
        \id_r \leq \frac{1}{\lambda_{r+1}}\sum_{j>r}^\infty \lambda_j = \frac{1}{(1-e^{-\alpha})}.
    \end{equation*}
\end{enumerate}
The bounds that we present below all involve the intrinsic dimension $\id_r$ and do not show explicit dependence on the dimension of the matrices. 
To illustrate the intrinsic dimension, consider the case $n-r=1024$; we plot the intrinsic dimension as a function of $\alpha$ for the moderately and severely ill-posed cases in Figure~\ref{fig:idr}. 
As can be seen, the intrinsic dimension decreases with the decay parameter $\alpha$ in both cases. 
Furthermore, this dimension is much smaller than the number of grid points $n$.
\begin{figure}[!ht]
    \centering
    \begin{subfigure}[b]{0.49\textwidth}
        \centering
        \includegraphics[width=\textwidth]{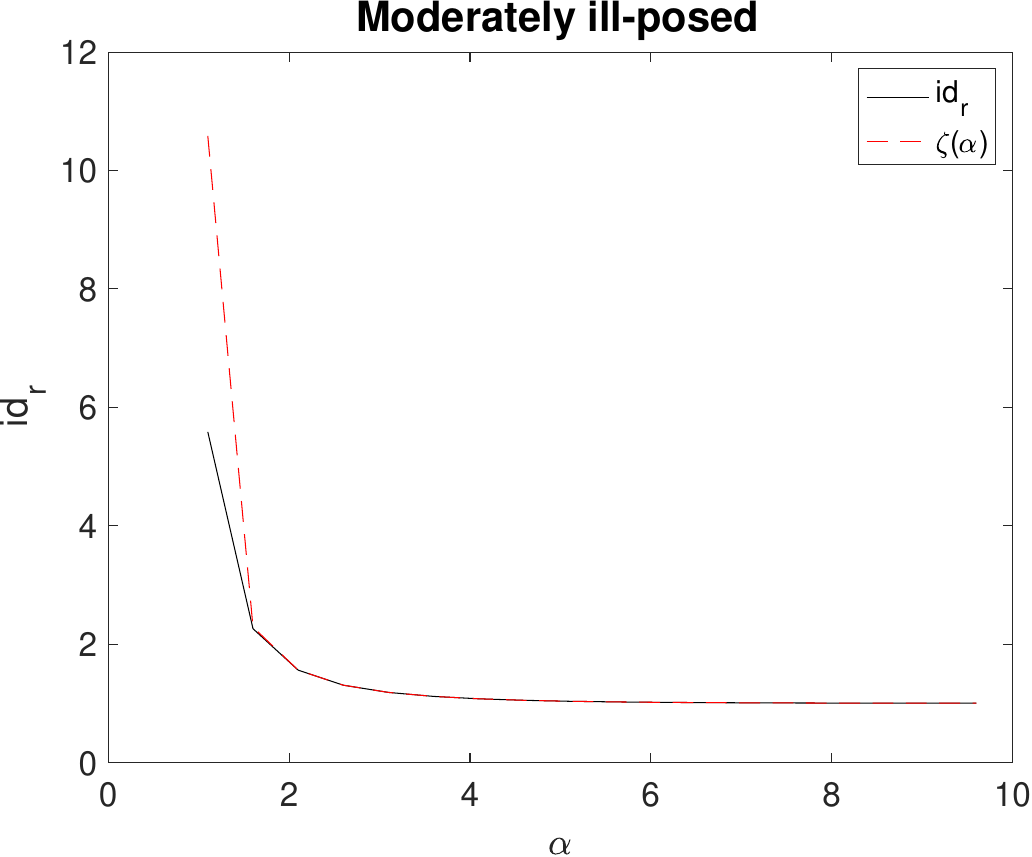}
    \end{subfigure}
    \hfill
    \begin{subfigure}[b]{0.49\textwidth}
        \centering
        \includegraphics[width=\textwidth]{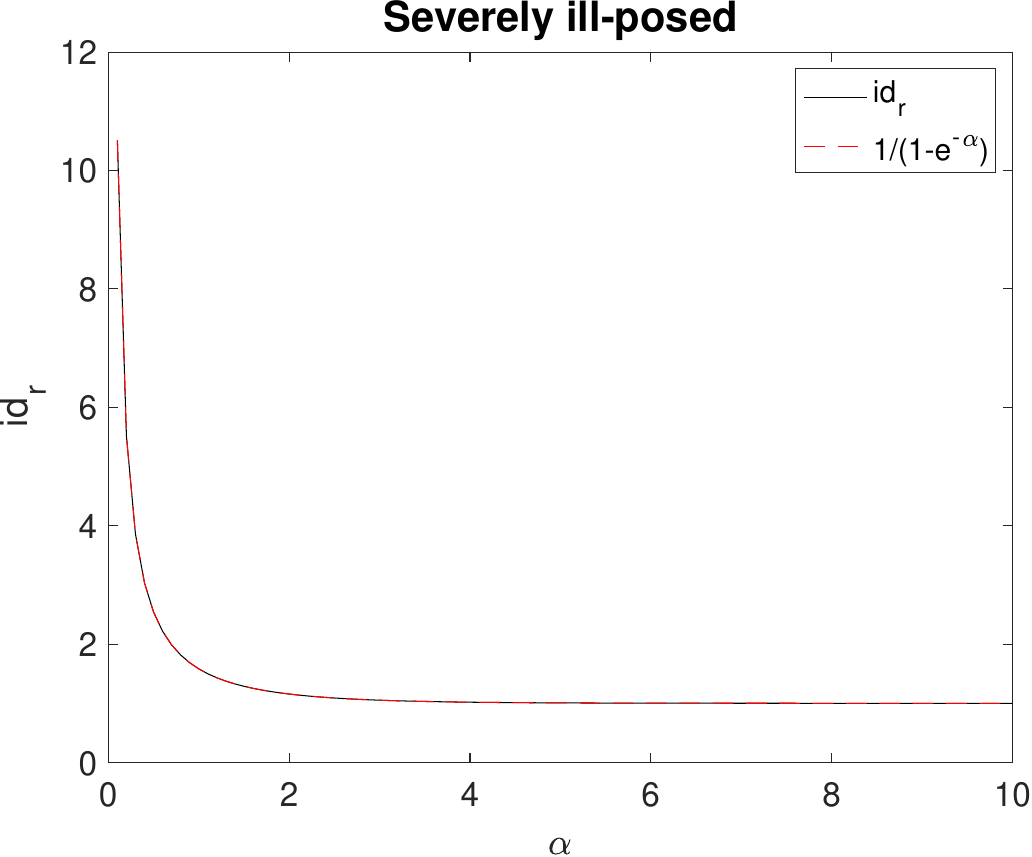}
    \end{subfigure}
    \caption{Plot of intrinsic dimension and the bounds for (left) moderately and (right) severely ill-posed cases.}
    \label{fig:idr}
\end{figure}

We first derive bounds for the RandSVD algorithm. 
\begin{theorem}[RandSVD]\label{thm:randsvd}
    Let $\B\Omega \in \mb{R}^{n\times \ell}$ be a standard Gaussian random matrix which is an input to \Cref{alg:randsvd}, where $\ell = r + p$, $r$ is the target rank and $p \geq 2$ is an oversampling parameter. 
    Consider the output of \Cref{alg:randsvd}, $\Bh{H}$, and define $\MBh{H} = \B\gampr^{-1/2}(\B{I}+\Bh{H})\B\gampr^{-1/2}$. 
    Let 
    \begin{equation*}
        \Psi_{\B{H}}(r,p) \equiv \lambda_{r+1}+ \frac{r}{p - 1} \sum_{j > r}\lambda_j = \left(1 + \frac{r}{p - 1}\id_r \right) \lambda_{r+1}.
    \end{equation*} 
    We have the following bounds: 
    \begin{enumerate}
        \item The error in the {SketchSolv} solution (defined as \cref{eqn:dxsol}) satisfies
        \begin{equation*}
            \expect{ \frac{\|\delta\B{x} - \delta\Bh{x}\|_{\B\gampr^{-1}}}{\|\delta\B{x}\|_{\B\gampr^{-1}}} } \leq  \Psi_{\B{H}}(r,p).
        \end{equation*}
        \item  The condition number of the preconditioned operator by the SketchPrec method satisfies
        %
        \begin{equation*}
            \expect{ \kappa_2(( \B{I} + \Bh{H})^{-1/2}(\B{I} + \B{H})(\B{I} + \Bh{H})^{-1/2})    } \leq 1 + \Psi_{\B{H}}(r,p) .
        \end{equation*}
        
        %
        %
    \end{enumerate}
\end{theorem}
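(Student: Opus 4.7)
The plan is to decompose the proof into two stages, mirroring the structure sketched in \cref{ssec:struct,ssec:prob}: first derive deterministic (structural) bounds for both quantities in terms of the spectral-norm error $\|\B{H}-\Bh{H}\|_2$, and then take expectations, invoking the standard analysis of randomized SVD for Gaussian sketches. The pivotal fact that makes both stages clean is that $\Bh{H} = \B{A}\t \B{Q}\B{Q}\t \B{A} \preceq \B{A}\t \B{A} = \B{H}$, which is immediate from the orthogonal-projector identity $\B{H}-\Bh{H} = \B{A}\t(\B{I}-\B{Q}\B{Q}\t)\B{A} \succeq 0$, and the consequence $\|(\B{I}+\Bh{H})^{-1}\|_2\leq 1$.

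For the structural step, I first handle the solution error. Writing $\B{y} = \B\gampr^{-1/2}\delta\B{x}$ and $\Bh{\B{y}} = \B\gampr^{-1/2}\delta\Bh{x}$, the system in \cref{eqn:gniterf1} and the Woodbury form \cref{eqn:dxsol} give $\B{y}= -(\B{I}+\B{H})^{-1}\B\gampr^{1/2}\B{g}$ and $\Bh{\B{y}} = -(\B{I}+\Bh{H})^{-1}\B\gampr^{1/2}\B{g}$. The resolvent identity then yields $\B{y}-\Bh{\B{y}} = -(\B{I}+\Bh{H})^{-1}(\B{H}-\Bh{H})\B{y}$, so dividing through by $\|\B{y}\|_2 = \|\delta\B{x}\|_{\B\gampr^{-1}}$ and using $\|(\B{I}+\Bh{H})^{-1}\|_2\leq 1$ gives the deterministic bound $\|\delta\B{x}-\delta\Bh{x}\|_{\B\gampr^{-1}}/\|\delta\B{x}\|_{\B\gampr^{-1}} \leq \|\B{H}-\Bh{H}\|_2$. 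For the condition number, I would write
\begin{equation*}
(\B{I}+\Bh{H})^{-1/2}(\B{I}+\B{H})(\B{I}+\Bh{H})^{-1/2} \;=\; \B{I} + (\B{I}+\Bh{H})^{-1/2}(\B{H}-\Bh{H})(\B{I}+\Bh{H})^{-1/2}.
\end{equation*}
Because $\B{H}-\Bh{H}\succeq 0$, the second summand is PSD, so the whole matrix $\succeq \B{I}$; hence its smallest eigenvalue is at least $1$ and $\kappa_2(\cdot)$ reduces to its largest eigenvalue, which is bounded by $1 + \|\B{H}-\Bh{H}\|_2$ once we bound the conjugating factors by $\|(\B{I}+\Bh{H})^{-1/2}\|_2\leq 1$.

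For the expectation step, both structural bounds reduce the problem to showing $\expect{\|\B{H}-\Bh{H}\|_2}\leq \Psi_{\B{H}}(r,p)$. I would exploit the projector identity $\|\B{H}-\Bh{H}\|_2 = \|(\B{I}-\B{Q}\B{Q}\t)\B{A}\|_2^2$, partition the SVD as in \cref{ssec:setup_analysis} into $\B\Sigma_1,\B\Sigma_2$ and $\B\Omega_1 = \B{V}_1\t\B\Omega,\ \B\Omega_2 = \B{V}_2\t\B\Omega$, and invoke the Halko--Martinsson--Tropp deterministic structural bound $\|(\B{I}-\B{Q}\B{Q}\t)\B{A}\|_2^2 \leq \|\B\Sigma_2\|_2^2 + \|\B\Sigma_2\B\Omega_2\B\Omega_1^\dagger\|_2^2$. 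The first term contributes $\lambda_{r+1}$. For the second, I pass to Frobenius norm and use independence of $\B\Omega_1,\B\Omega_2$ together with the Gaussian identity $\expect{\B\Omega_2^\top\B{X}\B\Omega_2} = \mathrm{tr}(\B{X})\B{I}$ to get $\expect{\|\B\Sigma_2\B\Omega_2\B\Omega_1^\dagger\|_F^2} = \expect{\|\B\Omega_1^\dagger\|_F^2}\,\|\B\Sigma_2\|_F^2 = \frac{r}{p-1}\sum_{j>r}\lambda_j$.

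The heart of the argument is routine once the structural reductions are in place; the main obstacle, and the part I expect to need the most care, is the expectation bound on $\|\B\Sigma_2\B\Omega_2\B\Omega_1^\dagger\|_2^2$. The spectral-to-Frobenius relaxation is free, but justifying the clean factorization into $\expect{\|\B\Omega_1^\dagger\|_F^2}\cdot \|\B\Sigma_2\|_F^2$ requires the well-known moment formula for inverses of Wishart matrices (Proposition~10.1 of Halko--Martinsson--Tropp, which needs $p\geq 2$ and is why $p\geq 2$ appears in the hypothesis). Combining this with the structural bounds immediately yields both claims.
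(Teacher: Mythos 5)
Your proposal is correct and follows essentially the same route as the paper: a structural reduction of both quantities to $\|\B{H}-\Bh{H}\|_2$ (using $\Bh{H}\preceq\B{H}$ and $\|(\B{I}+\Bh{H})^{-1}\|_2\le 1$), followed by the Halko--Martinsson--Tropp deterministic bound $\|(\B{I}-\B{QQ}\t)\B{A}\|_2^2 \le \|\B\Sigma_2\|_2^2 + \|\B\Sigma_2\B\Omega_2\B\Omega_1^\dagger\|_2^2$ and the Gaussian/inverse-Wishart moment computation yielding $\expect{\|\B{H}-\Bh{H}\|_2}\le \lambda_{r+1}+\frac{r}{p-1}\sum_{j>r}\lambda_j$. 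The only cosmetic difference is in the structural condition-number step: you note the preconditioned matrix equals $\B{I}$ plus a positive semidefinite term, so only its largest eigenvalue needs bounding, whereas the paper bounds the two factors of $\kappa_2$ separately via a Loewner-order argument; the content and the resulting bound $1+\|\B{E}\|_2$ are the same.
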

\begin{proof}
    See Section~\ref{ssec:prob}.
\end{proof}
\paragraph{Number of samples} 
We now discuss how to select the number of samples $\ell$. 
Assume that $r_\epsilon$ is the largest index such that $\lambda_{j} \leq \epsilon$ for all $j > r_\epsilon$, and where $\epsilon$ is a user-specified threshold. 
If we take $\ell = \lceil\frac{6}{5}r_\epsilon + 1 \rceil$, then 
\begin{equation*}
    \Psi_{\B{H}}(r,p) \leq \left( 1 + 5 \text{id}_{r_\epsilon}\right)\epsilon.
\end{equation*}
We can then readily bound the appropriate quantities in Theorem~\ref{thm:randsvd}. 
For example,
\begin{equation*}
    \expect{\kappa_2(( \B{I} + \Bh{H})^{-1/2}(\B{I} + \B{H})(\B{I} + \Bh{H})^{-1/2})}  \leq 1 + \left( 1 + 5 \id_{r_\epsilon}\right)\epsilon.    
\end{equation*}
With this simplified representation, we can now interpret the results of \Cref{thm:randsvd}.
The condition number of the preconditioned system is close to 1, provided that the 
eigenvalues of $\B{H}$ are decaying rapidly (either moderately or severely ill-posed).
Similarly, the error in the GN step is small in the $\|\cdot\|_{\B\gampr^{-1}}$ norm, under the same conditions. 

{The above discussion is especially useful in scenarios where additional information about the eigenvalues of the underlying GN Hessian is known. However, in certain scenarios such as in operational data assimilation, this might not be known. In such cases, we have to treat the number of samples as a tuning parameter.}

We now provide analysis for the Nystr\"om sketch. 
\begin{theorem}[Nystr\"om]\label{thm:nystrom_solv}
Let $\B\Omega \in \mb{R}^{n\times \ell}$ be a standard Gaussian random matrix which is an input to Algorithm~\ref{alg:nystrom}, where $\ell = r + p$, $r$ is the target rank and $p \geq 2$ is an oversampling parameter. 
Consider the output of Algorithm~\ref{alg:nystrom}, $\Bh{H}$, and define $\MBh{H} = \B\gampr^{-1/2}(\B{I}+\Bh{H})\B\gampr^{-1/2}$. 
Then, with $\Psi_{\B{H}}(r,p)$ as defined in Theorem~\ref{thm:randsvd}
\begin{enumerate}
    \item The error in the {SketchSolv} solution satisfies
    \begin{equation*}
        \expect{ \frac{\|\delta\B{x} - \delta\Bh{x}\|_{\B\gampr^{-1}}}{\|\delta\B{x}\|_{\B\gampr^{-1}}}} \leq \Psi_{\B{H}}(r,p).
    \end{equation*}
    \item The condition number of the preconditioned operator by the SketchPrec method satisfies
    \begin{equation*}
        \expect{ {\kappa_2(( \B{I} + \Bh{H})^{-1/2}(\B{I} + \B{H})(\B{I} + \Bh{H})^{-1/2})}} \leq 1 +   \Psi_{\B{H}}(r,p).
    \end{equation*}
\end{enumerate}
\end{theorem}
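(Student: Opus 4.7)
The plan is to follow the same two-stage template used for Theorem~\ref{thm:randsvd}: first invoke the deterministic structural bounds from Section~\ref{ssec:struct}, and then supply a probabilistic bound tailored to the Nystr\"om sketch. Since both conclusions are phrased in terms of $\B{H}$ and $\Bh{H}$ alone, I expect the structural results to control the expected error in the GN step and the preconditioned condition number by $\expect{\|\B{H} - \Bh{H}\|_2}$ (up to an additive $1$ in the condition-number statement), provided the surrogate satisfies $\B{0} \preceq \Bh{H} \preceq \B{H}$. The proof therefore reduces to (i) verifying this PSD ordering for the Nystr\"om output, and (ii) bounding $\expect{\|\B{H} - \Bh{H}_{\rm Nys}\|_2}$ by $\Psi_{\B{H}}(r,p)$.

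For step (i), the ideal Nystr\"om approximation $\Bh{H}_{\rm Nys} = \B{H}\B\Omega (\B\Omega\t\B{H}\B\Omega)^\dagger (\B{H}\B\Omega)\t$ is a Schur complement of a PSD block matrix whose $(1,1)$ block is $\B{H}$ and whose $(1,2)$ block is $\B{H}\B\Omega$, which immediately yields $\B{0} \preceq \Bh{H}_{\rm Nys} \preceq \B{H}$. The stabilization shift $\nu\B\Omega$ added in Algorithm~\ref{alg:nystrom} is cancelled analytically by the truncation $\Bh\Lambda = \max\{0,\Bh\Sigma^2 - \nu\B{I}\}$, so the analysis may be carried out on the ideal Nystr\"om form without loss of generality.

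The main work lies in step (ii). I would decompose $\B\Omega$ in the eigenbasis of $\B{H}$ as $\B\Omega_1 = \B{V}_1\t\B\Omega \in \R^{r\times \ell}$ and $\B\Omega_2 = \B{V}_2\t\B\Omega \in \R^{(n-r)\times \ell}$; rotational invariance of the standard Gaussian distribution makes these independent Gaussian matrices. On the almost-sure event that $\B\Omega_1$ has full row rank, a direct calculation exposes $\B{H} - \Bh{H}_{\rm Nys}$ in a form that separates a leading contribution of norm at most $\lambda_{r+1}$ from a trailing term of the shape $\B\Sigma_2 \B\Omega_2 \B\Omega_1^\dagger (\cdot) \B\Omega_1^{\dagger\t} \B\Omega_2\t \B\Sigma_2$. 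Taking operator norms, conditioning on $\B\Omega_1$, and applying the classical identity $\expect{\B\Omega_1^\dagger (\B\Omega_1^\dagger)\t} = (p-1)^{-1}\B{I}_r$, which is valid for a Gaussian $\B\Omega_1 \in \R^{r\times(r+p)}$ with $p \geq 2$, together with $\|\B\Sigma_2\|_F^2 = \sum_{j>r}\lambda_j$ and Jensen's inequality, delivers $\expect{\|\B{H} - \Bh{H}_{\rm Nys}\|_2} \leq \lambda_{r+1} + \frac{r}{p-1}\sum_{j>r}\lambda_j = \Psi_{\B{H}}(r,p)$. I expect the main obstacle to be isolating the sharp $\lambda_{r+1}$ summand rather than folding everything into a weaker nuclear-norm bound of the form $(1+\frac{r}{p-1})\sum_{j>r}\lambda_j$, which is precisely what the block decomposition above is designed to accomplish.
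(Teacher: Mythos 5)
Your proposal is correct and follows essentially the same route as the paper: verify the ordering $\B{0} \preceq \Bh{H} \preceq \B{H}$ (the paper via the orthogonal-projector representation $\Bh{H} = \B{H}^{1/2}\B{P}_{\B{H}^{1/2}\B\Omega}\B{H}^{1/2}$, you via an equivalent Schur-complement argument), then invoke Theorem~\ref{thm:solve} and the sharper positive-semidefinite case of Theorem~\ref{thm:bound_loew}, and finally bound $\expect{\|\B{H}-\Bh{H}\|_2}$ by $\Psi_{\B{H}}(r,p)$. The only difference is that the paper simply cites \cite[Theorem 14.1]{martinsson2020randomized} for that expectation bound, whereas you re-derive it by applying the Halko--Martinsson--Tropp style analysis to $\B{H}^{1/2}$ with the split $\B\Omega_1 = \B{V}_1\t\B\Omega$, $\B\Omega_2 = \B{V}_2\t\B\Omega$ and the inverse-Wishart identity, which is precisely the content of the cited result.
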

\begin{proof}
    See Section~\ref{ssec:prob}.
\end{proof}
Surprisingly, both RandSVD and Nystr\"om approximations give identical upper bounds in expectation. The interpretation of Nystr\"om is, therefore, similar to RandSVD. Both methods are also comparable in computational cost. 

The result (Theorem~\ref{thm:nystrom_solv}) is slightly weaker compared to the result in~\cite[Theorem 5.1]{frangella2021randomized}. From the proof of~\cite[Theorem 5.1]{frangella2021randomized}, we find the inequality (in our notation, with $p=r+1$ and $\ell = 2r-1$)
\[\expect{ {\kappa_2(( \B{I} + \Bh{H})^{-1/2}(\B{I} + \B{H})(\B{I} + \Bh{H})^{-1/2})}} \leq 2 + \left(3 + \frac{4e^2}{r+1} \id_{r}\right)\lambda_{r+1}.\]
In contrast, our bound simplifies to (assuming $r > 2$)
\[ \expect{ {\kappa_2(( \B{I} + \Bh{H})^{-1/2}(\B{I} + \B{H})(\B{I} + \Bh{H})^{-1/2})}} \leq 1 + \left(1 + \frac{r}{r-2} \id_{r}\right)\lambda_{r+1}.\] 
The upper bound in our result is slightly worse for larger $r$. However, our proof technique applies to other sketching methods. 

Finally, we turn to the analysis of the SingleView sketch.

\begin{theorem}[SingleView]\label{thm:singleview}
Suppose $\B\Omega \in \mb{R}^{n\times \ell_1}$ and $\B\Psi \in \mb{R}^{m\times \ell_2}$ (standard Gaussian random matrices) are inputs to \Cref{alg:singleview}, where $\ell_1 = 2r + 1, \ell_2 = 2\ell_1 + 1 = 4r + 3$ and $r$ is the target rank. 
Consider the output of \Cref{alg:singleview}, $\Bh{H}$, and define $\MBh{H} = \B\gampr^{-1/2}(\B{I}+\Bh{H})\B\gampr^{-1/2}$.
Let 
\begin{equation*}
    {\Theta_{\B{H}}(r,p) \equiv  4\left( \sum_{j>r} \lambda_1\lambda_j  \right)^{1/2} +  4  \left(\sum_{j>r}\lambda_j \right) =  \left( \left(\frac{\lambda_1}{\lambda_{r+1}}\right)^{1/2} \id_r^{1/2} +  \id_r \right) 4 \lambda_{r+1}. }  
\end{equation*}
Then,
\begin{enumerate}
    \item The error in the {SketchSolv} solution satisfies
    \begin{equation*}
        \expect{ \frac{\|\delta\B{x} - \delta\Bh{x}\|_{\B\gampr^{-1}}}{\|\delta\B{x}\|_{\B\gampr^{-1}} }} \leq  \Theta_{\B{H}}(r,p).
    \end{equation*}
    \item The condition number of the preconditioned operator by the SketchPrec method satisfies 
    \begin{equation*}
        \expect{\sqrt{\kappa_2(( \B{I} + \Bh{H})^{-1/2}(\B{I} + \B{H})(\B{I} + \Bh{H})^{-1/2})}}    \leq 1 +  \Theta_{\B{H}}(r,p).        
    \end{equation*}
    %
    \end{enumerate}
\end{theorem}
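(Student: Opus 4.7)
The plan is to reduce both claims to bounding $\|\B{H} - \Bh{H}\|$ (in an appropriate norm) through the structural lemmas developed in Section~\ref{ssec:struct}, and then to invoke the expected Frobenius error bound from Tropp et al.\ for the SingleView sketch with $\ell_1 = 2r+1,\ \ell_2 = 4r+3$. A key conceptual point is that the projector $\B{P}_{SV} = \B{Q}_Y(\B\Psi^\top\B{Q}_Y)^\dagger\B\Psi^\top$ is oblique, so $\Bh{H} \preceq \B{H}$ does \emph{not} hold. Consequently the clean semidefinite-ordering arguments used for RandSVD and Nystr\"om in Theorems~\ref{thm:randsvd} and \ref{thm:nystrom_solv} are unavailable, and we must instead measure the perturbation $\B{H} - \Bh{H}$ absolutely.

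First, I would write $\B{E} \equiv \B{A} - \Bh{A}_{SV}$ and use the identity
\begin{equation*}
\B{H} - \Bh{H} \;=\; (\B{A}-\Bh{A}_{SV})^\top \B{A} + \Bh{A}_{SV}^\top(\B{A}-\Bh{A}_{SV}) \;=\; \B{E}^\top \B{A} + \Bh{A}_{SV}^\top \B{E}.
\end{equation*}
Since $\|\Bh{A}_{SV}\|_2 \leq \|\B{A}\|_2 + \|\B{E}\|_2 \leq \sigma_1 + \|\B{E}\|_F$, taking the Frobenius norm yields
\begin{equation*}
\|\B{H} - \Bh{H}\|_F \;\leq\; 2\sigma_1 \|\B{E}\|_F + \|\B{E}\|_F^2.
\end{equation*}
Next, I would invoke the SingleView analysis of Tropp et al., which under the parameter choice $\ell_1=2r+1$, $\ell_2=2\ell_1+1$ (their $\alpha=1$) gives $\expect{\|\B{E}\|_F^2} \leq \tfrac{9}{4}\sum_{j>r}\sigma_j^2 = \tfrac{9}{4}\|\B\Sigma_2\|_F^2$. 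Combining this with Jensen's inequality $\expect{\|\B{E}\|_F} \leq \sqrt{\expect{\|\B{E}\|_F^2}} \leq \tfrac{3}{2}\|\B\Sigma_2\|_F$ produces
\begin{equation*}
\expect{\|\B{H} - \Bh{H}\|_F} \;\leq\; 3\sigma_1\|\B\Sigma_2\|_F + \tfrac{9}{4}\|\B\Sigma_2\|_F^2 \;=\; \Theta_{\B{H}}(r,p),
\end{equation*}
using $\sigma_1 = \lambda_1^{1/2}$ and $\|\B\Sigma_2\|_F^2 = \sum_{j>r}\lambda_j$ to match the form of $\Theta_{\B{H}}(r,p)$ stated in the theorem.

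Finally, the two bounds follow by plugging into the structural results. For the solution error, the structural analysis controls $\|\delta\B{x}-\delta\Bh{x}\|_{\B\Gamma^{-1}}/\|\delta\B{x}\|_{\B\Gamma^{-1}}$ by $\|\B{H}-\Bh{H}\|_2 \leq \|\B{H}-\Bh{H}\|_F$, and taking expectations gives the first claim. For the condition number, I would use that $\sqrt{\kappa_2((\B{I}+\Bh{H})^{-1/2}(\B{I}+\B{H})(\B{I}+\Bh{H})^{-1/2})}$ admits an additive perturbation bound of the form $1 + \|\B{H}-\Bh{H}\|$ in the relevant norm (this is where the square root on the left is essential: it turns the submultiplicative ratio into a linearly controllable quantity, so that Jensen's inequality can be applied to push the expectation inside). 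The main technical obstacle is precisely this last step: because the oblique projector forbids the semidefinite ordering, we must settle for a weaker, square-root form of the condition number bound and must carefully track the cross term $\sigma_1\|\B{E}\|_F$, which accounts for the $3(\lambda_1/\lambda_{r+1})^{1/2}\id_r^{1/2}$ factor that is absent from the RandSVD and Nystr\"om bounds.
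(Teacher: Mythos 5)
Your proposal is correct and follows essentially the same route as the paper: reduce to $\|\B{H}-\Bh{H}\|_2 \le 2\|\B{A}\|_2\|\B{A}-\Bh{A}_{SV}\|_F + \|\B{A}-\Bh{A}_{SV}\|_F^2$, apply the Tropp et al.\ bound $\expect{\|\B{A}-\Bh{A}_{SV}\|_F^2}\le \tfrac94\sum_{j>r}\sigma_j^2$ with Cauchy--Schwarz/Jensen, and feed the result into Theorems~\ref{thm:solve} and~\ref{thm:bound_loew} (the non-PSD case, whence the square root on the condition number). The only cosmetic quibble is that the last step is not really Jensen's inequality: Theorem~\ref{thm:bound_loew} gives $\kappa_2 \le (1+\|\B{E}\|_2)^2$ directly, so $\sqrt{\kappa_2}\le 1+\|\B{E}\|_2$ and one simply takes expectations of this linear bound.
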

\begin{proof}
    See Section~\ref{ssec:prob}.
\end{proof}

The results of SingleView are considerably weaker than RandSVD and Nystr\"om. 
We see the appearance of $\lambda_1$ in the bounds, which can be large. 
Furthermore, we are only able to bound the expected value of the square root of the condition number, rather than the expected value of the condition number. 
We explain these reasons in \Cref{ssec:prob}. 

\subsection{Structural analysis} 
\label{ssec:struct}

In the analysis below, the error $\B{E} = \B{H} - \Bh{H}$ plays an important role. 
We consider two different cases: $\B{E} \succeq \B{0}$, and otherwise. 
This distinction is important because, with the stronger assumption $\B{E} \succeq \B{0}$, we can obtain tighter bounds in some cases. This assumption is satisfied for two of the methods (RandSVD and Nystr\"om) as we will show.
We call this analysis {\em structural} because it does not take into account the randomness of the inputs. 
It applies to any approximation $\B{H} \approx \Bh{H}$, that need not be a randomized or even a low-rank approximation. 

Our first result quantifies the error in the solution of the linear system. 

 \begin{theorem}[Error in solve]\label{thm:solve}
     Let $\B{E} =  \B{H} - \Bh{H}$ denote the error in the low-rank approximation. The relative error in the linear system satisfies
     \begin{equation*}
         \frac{\|\delta\B{x} - \delta\Bh{x}\|_{\B\gampr^{-1}}}{\|\delta\B{x}\|_{\B\gampr^{-1}}} \leq  \|\B{E}\|_2.
     \end{equation*} 
 \end{theorem}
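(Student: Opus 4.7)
The plan is to work in the prior-preconditioned coordinates. Introduce $\B{u} = \B\gampr^{-1/2}\delta\B{x}$ and $\Bh{u} = \B\gampr^{-1/2}\delta\Bh{x}$. By the factorization in \eqref{eqn:Rhalf}, the exact GN step satisfies $(\B{I}+\B{H})\B{u} = -\B\gampr^{1/2}\B{g}$, while the closed-form SketchSolv update \eqref{eqn:dxsol} is equivalent to $(\B{I}+\Bh{H})\Bh{u} = -\B\gampr^{1/2}\B{g}$. Since $\|\B{v}\|_{\B\gampr^{-1}} = \|\B\gampr^{-1/2}\B{v}\|_2$ for any vector $\B{v}$, the ratio we need to control is exactly $\|\Bh{u}-\B{u}\|_2 / \|\B{u}\|_2$.

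Next I would subtract the two systems. Writing $\B{H} = \Bh{H} + \B{E}$ gives
\begin{equation*}
(\B{I}+\Bh{H})\Bh{u} \;=\; -\B\gampr^{1/2}\B{g} \;=\; (\B{I}+\Bh{H}+\B{E})\B{u},
\end{equation*}
so that $\Bh{u}-\B{u} = (\B{I}+\Bh{H})^{-1}\B{E}\,\B{u}$. Taking 2-norms and applying submultiplicativity yields
\begin{equation*}
\|\Bh{u}-\B{u}\|_2 \;\leq\; \|(\B{I}+\Bh{H})^{-1}\|_2\,\|\B{E}\|_2\,\|\B{u}\|_2.
\end{equation*}

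The concluding step is to argue that $\|(\B{I}+\Bh{H})^{-1}\|_2 \leq 1$. This holds whenever $\Bh{H}$ is positive semidefinite, which is the case for every randomized construction in \Cref{ssec:sketching} (each produces $\Bh{H} = \Bh{V}\Bh\Lambda\Bh{V}\t$ with $\Bh\Lambda \succeq \B{0}$), since then every eigenvalue of $\B{I}+\Bh{H}$ is at least one. Dividing by $\|\B{u}\|_2 = \|\delta\B{x}\|_{\B\gampr^{-1}}$ and recognizing $\|\Bh{u}-\B{u}\|_2 = \|\delta\Bh{x}-\delta\B{x}\|_{\B\gampr^{-1}}$ delivers the claimed bound.

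The main obstacle is minor and essentially bookkeeping: I must be careful to apply the change of variables to the $\B\gampr^{-1}$-norm consistently, and I must either restrict the structural statement to PSD approximations $\Bh{H}$ or flag PSD-ness as a standing hypothesis so that the spectral bound $\|(\B{I}+\Bh{H})^{-1}\|_2 \leq 1$ is justified. Beyond that, the argument is a short and elementary perturbation calculation that makes no use of low-rank structure or randomness, which is precisely why this result fits into the structural analysis of \Cref{ssec:struct} and can later be combined with probabilistic bounds on $\|\B{E}\|_2$ to yield \Cref{thm:randsvd,thm:nystrom_solv,thm:singleview}.
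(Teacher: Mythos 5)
Your proof is correct and follows essentially the same route as the paper's: both subtract the two linear systems written in prior-preconditioned variables, isolate $\B\gampr^{-1/2}(\delta\Bh{x}-\delta\B{x}) = (\B{I}+\Bh{H})^{-1}\B{E}\,\B\gampr^{-1/2}\delta\B{x}$, apply submultiplicativity, and bound $\|(\B{I}+\Bh{H})^{-1}\|_2 \leq 1$ using the fact that $\Bh{H} \succeq \B{0}$. Your explicit change of variables $\B{u} = \B\gampr^{-1/2}\delta\B{x}$ and your flagging of the PSD hypothesis are presentational choices only; the paper carries the $\B\gampr^{-1/2}$ factors through directly and invokes the same spectral bound at the end.
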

 This result has similarities to~\cite[Proposition 3.1]{frangella2021randomized}, but applies to a more general covariance matrix. 
 The interpretation of this bound is that the error is small if $\|\B{E}\|_2$ is small. 
 We were not able to obtain a tighter bound if, additionally, $\B{E} \succeq \B{0}$ holds.
 The analysis bounds the error in the $\|\cdot \|_{\B\gampr^{-1}}$ norm; if, however, the error is desired in the 2-norm, we can equivalently write 
 \begin{equation*}
     \frac{\|\delta\B{x} - \delta\Bh{x}\|_{2}}{\|\delta\B{x}\|_{2}} \leq \sqrt{\kappa_2(\B\gampr)}\frac{\|\delta\B{x} - \delta\Bh{x}\|_{\B\gampr^{-1}}} {\|\delta\B{x}\|_{\B\gampr^{-1}}} \leq  \sqrt{\kappa_2(\B\gampr)}\|\B{E}\|_2.    
\end{equation*}

 \begin{proof}[Proof of Theorem~\ref{thm:solve}]
 Observe that 
\begin{equation*}
    \MB{H} = \B\gampr^{-1/2} ( \B{I} + \Bh{H} + \B{E}) \B\gampr^{-1/2} = \MBh{H} + \B\gampr^{-1/2}  \B{E} \B\gampr^{-1/2} .   
\end{equation*}
 From $\MBh{H}\delta\Bh{x} = -\B{g} = \MB{H}\delta\B{x}$, we can write 
 \begin{equation*}
     \MBh{H}(\delta\Bh{x} - \delta\B{x}) = (\MB{H}- \MBh{H})\delta\B{x} =  \B\gampr^{-1/2}  \B{E} \B\gampr^{-1/2}\delta\B{x}.   
 \end{equation*}
 On the left hand side, we expand $\MBh{H}$, we get 
 \begin{equation*}
     \B\gampr^{-1/2}(\B{I} + \Bh{H} )\B\gampr^{-1/2} (\delta\Bh{x} - \delta\B{x}) =  \B\gampr^{-1/2}  \B{E} \B\gampr^{-1/2}\delta\B{x} ,
 \end{equation*} 
 which simplifies to $\B\gampr^{-1/2} (\delta\Bh{x} - \delta\B{x}) =  (\B{I} + \Bh{H} )^{-1}\B{E} \B\gampr^{-1/2}\delta\B{x}$. 
 Using submultiplicativity
 \begin{equation*}
     \|(\delta\Bh{x} - \delta\B{x})\|_{\B\gampr^{-1} } \leq \|(\B{I} + \Bh{H} )^{-1}\|_2  \| \B{E}\|_2 \|\delta\B{x}\|_{\B\gampr^{-1}}.
 \end{equation*}
 The proof is complete by noting that the singular values of $(\B{I} + \Bh{H})$ are at least $1$. \qed
 \end{proof}

We now analyze the condition number of the preconditioned matrix. In this analysis, we assume that $\Bh{H}\preceq \B{H}$. This assumption holds for the Randomized SVD and the Nystr\"om approximation, but not for the Single View approach. 
 \begin{theorem}[Condition number bound]\label{thm:bound_loew}
Let $\B{E} = \B{H} - \Bh{H}$.
Then 
\begin{equation*}
        \kappa_2\left( \left(\B{I} + \Bh{H}\right)^{-1/2} \left(\B{I} + \B{H}\right)\left(\B{I} + \Bh{H}\right)^{-1/2} \right) \leq \left(1 +  \|\B{E}\|_2\right)^2.        
\end{equation*}
If, additionally, $\B{E} \succeq \B{0}$, then 
\[ \kappa_2\left( \left(\B{I} + \Bh{H}\right)^{-1/2} \left(\B{I} + \B{H}\right)\left(\B{I} + \Bh{H}\right)^{-1/2} \right) \le 1 + \|\B{E}\|_2.\]
\end{theorem}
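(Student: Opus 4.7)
The plan is to factor the condition number as $\kappa_2(\B{P}) = \|\B{P}\|_2 \|\B{P}^{-1}\|_2$, where $\B{P} = (\B{I}+\Bh{H})^{-1/2}(\B{I}+\B{H})(\B{I}+\Bh{H})^{-1/2}$, and bound each factor by $1 + \|\B{E}\|_2$ separately. The key structural facts I would lean on are that both $\B{I}+\B{H}$ and $\B{I}+\Bh{H}$ are symmetric positive definite and dominate the identity in the Loewner order, because $\B{H}$ and $\Bh{H}$ are positive semidefinite by construction.

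For $\|\B{P}\|_2$, the generalized Rayleigh quotient characterization gives $\lambda_{\max}(\B{P}) = \max_{\B{w} \ne \B{0}} \B{w}^\top(\B{I}+\B{H})\B{w}/\B{w}^\top(\B{I}+\Bh{H})\B{w} = 1 + \max_\B{w} \B{w}^\top\B{E}\B{w}/\B{w}^\top(\B{I}+\Bh{H})\B{w}$. Combining $\B{I}+\Bh{H} \succeq \B{I}$ in the denominator with the eigenvalue bound $\B{w}^\top\B{E}\B{w} \le \|\B{E}\|_2 \|\B{w}\|_2^2$ in the numerator produces $\|\B{P}\|_2 \le 1 + \|\B{E}\|_2$. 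The bound on $\|\B{P}^{-1}\|_2$ is obtained symmetrically, starting from $1/\lambda_{\min}(\B{P}) = \max_\B{w} \B{w}^\top(\B{I}+\Bh{H})\B{w}/\B{w}^\top(\B{I}+\B{H})\B{w} = 1 - \min_\B{w} \B{w}^\top\B{E}\B{w}/\B{w}^\top(\B{I}+\B{H})\B{w}$, and applying $\B{w}^\top\B{E}\B{w} \ge -\|\B{E}\|_2 \|\B{w}\|_2^2$ with $\B{I}+\B{H} \succeq \B{I}$ in the denominator yields $\|\B{P}^{-1}\|_2 \le 1 + \|\B{E}\|_2$. Multiplying the two bounds produces the first inequality.

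For the refined bound under the additional assumption $\B{E} \succeq \B{0}$, the observation is that $\B{I}+\B{H} \succeq \B{I}+\Bh{H}$, and congruence by $(\B{I}+\Bh{H})^{-1/2}$ -- which preserves the Loewner order, as recalled in \Cref{ssec:setup_analysis} -- gives $\B{P} \succeq \B{I}$, so $\lambda_{\min}(\B{P}) \ge 1$. Combined with the earlier $\|\B{P}\|_2 \le 1 + \|\B{E}\|_2$, this immediately yields $\kappa_2(\B{P}) \le 1 + \|\B{E}\|_2$.

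I do not anticipate a serious technical obstacle, but the one subtlety is in the indefinite case: a naive symmetric bound starting from $\B{P} = \B{I} + (\B{I}+\Bh{H})^{-1/2}\B{E}(\B{I}+\Bh{H})^{-1/2}$ and using $\|(\B{I}+\Bh{H})^{-1/2}\|_2 \le 1$ on both sides yields only $\kappa_2(\B{P}) \le (1+\|\B{E}\|_2)/(1-\|\B{E}\|_2)$ and would require $\|\B{E}\|_2 < 1$. The trick is to use a different positive-definite denominator for each of $\|\B{P}\|_2$ and $\|\B{P}^{-1}\|_2$ (namely $\B{I}+\Bh{H}$ and $\B{I}+\B{H}$, respectively), which decouples the two bounds and removes any smallness assumption on $\|\B{E}\|_2$.
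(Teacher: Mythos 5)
Your proof is correct, and it follows the same overall structure as the paper's---bound $\|\B{P}\|_2$ and $\|\B{P}^{-1}\|_2$ separately, each by $1 + \|\B{E}\|_2$---but the machinery you use to bound each factor is genuinely different and more elementary. The paper writes each factor as the norm of a matrix of the form $\left(\B{I}+\Bh{H}\right)^{-1/2}(\cdot)\left(\B{I}+\Bh{H}\right)^{-1/2}$, applies the triangle inequality after splitting $\B{I}+\B{H} = \left(\B{I}+\Bh{H}\right) + \B{E}$, and then needs two results from Bhatia's book: Proposition IX.1.1 (that $\|\B{CD}\|_2 \le \|\B{DC}\|_2$ when $\B{CD}$ is normal) to shuffle the square-root factors, and Lemma V.1.7 to handle the Loewner-ordered case. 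You instead invoke the generalized Rayleigh-quotient characterization $\lambda_{\max}(\B{P}) = \max_{\B{w}} \B{w}^\top(\B{I}+\B{H})\B{w} / \B{w}^\top(\B{I}+\Bh{H})\B{w}$ (and its counterpart for $\lambda_{\min}^{-1}$), after which the bounds follow from nothing more than $\B{I}+\Bh{H}\succeq\B{I}$, $\B{I}+\B{H}\succeq\B{I}$, and $|\B{w}^\top\B{E}\B{w}| \le \|\B{E}\|_2\|\B{w}\|_2^2$; the refined bound under $\B{E}\succeq\B{0}$ drops out from congruence preserving the Loewner order, giving $\B{P}\succeq\B{I}$. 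Your route is self-contained, avoids the external matrix-analytic lemmas, and makes transparent the point you rightly flag as the crux: the two factors must be bounded against different positive-definite quadratic forms ($\B{I}+\Bh{H}$ for $\lambda_{\max}$, $\B{I}+\B{H}$ for $\lambda_{\min}^{-1}$) precisely to avoid any smallness requirement on $\|\B{E}\|_2$. The paper's version, while less elementary, expresses the intermediate bounds as operator-norm inequalities, which aligns with how the structural bound is then consumed in the probabilistic analysis of Section~\ref{ssec:prob}.
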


It is clear from the theorem that the bound is tighter when $\B{E} \succeq \B{0}$. The interpretation of this theorem is similar to the others in this section.

\begin{proof}
By definition, the condition number is 
\begin{align*}
      \kappa_2\left( \left(\B{I} + \Bh{H}\right)^{-1/2} \left(\B{I} + \B{H}\right)\left(\B{I} + \Bh{H}\right)^{-1/2} \right) &= \underbrace{\|\left(\B{I} + \Bh{H}\right)^{-1/2} \left(\B{I} + \B{H}\right)\left(\B{I} + \Bh{H}\right)^{-1/2}\|_2}_{\alpha} \times \nonumber\\
      &\underbrace{\| \left(\B{I} + \Bh{H}\right)^{1/2} \left(\B{I} + \B{H}\right)^{-1}\left(\B{I} + \Bh{H}\right)^{1/2}\|_2}_\beta.    
\end{align*}
We bound the two terms separately.
To bound $\alpha$, we write $\B{I} + \B{H} = \B{I} +\B{H} - \Bh{H} + \Bh{H}$, so that 
\begin{align*}
        \left(\B{I} + \Bh{H}\right)^{-1/2} \left(\B{I} + \B{H}\right)\left(\B{I} + \Bh{H}\right)^{-1/2} &= \left(\B{I} + \Bh{H}\right)^{-1/2} \left(\B{I} + \Bh {H}\right)\left(\B{I} + \Bh{H}\right)^{-1/2} + \nonumber\\ 
        & \left(\B{I} + \Bh{H}\right)^{-1/2} \left(\B{H} - \Bh {H}\right)\left(\B{I} + \Bh{H}\right)^{-1/2}\,.
\end{align*}
Therefore, using the triangle inequality $\alpha \leq 1 + \|(\B{I} +\Bh{H})^{-1/2} (\B{H}-\Bh{H}) (\B{I} + \Bh{H})^{-1/2}\|_2$. 
Let $\B{C},\B{D}$ be two matrices such that the product $\B{CD}$ is normal, then using~\cite[Proposition IX.1.1]{bhatia1997matrix}, we have
$\|\B{CD}\|_2 \le \|\B{DC}\|_2$. We take $\B{C} = (\B{I} +\Bh{H})^{-1/2} $, and $\B{D} = (\B{H}-\Bh{H}) (\B{I} + \Bh{H})^{-1/2}$ and observe that $\B{CD} = (\B{I} +\Bh{H})^{-1/2} (\B{H}-\Bh{H}) (\B{I} + \Bh{H})^{-1/2}$ is symmetric (therefore, normal) to obtain 
\begin{align*}
\|\left(\B{I} + \Bh{H}\right)^{-1/2} \left(\B{H} - \Bh {H}\right)\left(\B{I} + \Bh{H}\right)^{-1/2}\|_2 \leq \|(\B{H} - \Bh{H} )(\B{I} + \Bh{H})^{-1}\|_2\,.
\end{align*}
Observe that the eigenvalues of $\B{I} + \Bh{H}$ is bounded from below by 1 and hence 
\begin{align*}
\alpha \leq 1 + \|\B{E}\|_2\,.
\end{align*}
To bound $\beta$,  we split into {two cases:
\begin{enumerate}
    \item For the general case, we once again use ~\cite[Proposition IX.1.1]{bhatia1997matrix}. We take $\B{C} = (\B{I} +\Bh{H})^{1/2} (\B{I} + \B{H})^{-1}$ and $\B{D} = (\B{I} +\Bh{H})^{1/2}$. We have
\begin{align*}
\| \left(\B{I} + \Bh{H}\right)^{1/2} \left(\B{I} + \B{H}\right)^{-1}\left(\B{I} + \Bh{H}\right)^{1/2}\|_2 \le \|\left(\B{I} + \Bh{H}\right)  \left(\B{I} + \B{H}\right)^{-1} \|_2 \,.
\end{align*}
Writing $\B{I} + \Bh{H} = \B{I} + \B{H} + \Bh{H} - \B{H}$ and using triangular inequality, we obtain
\begin{align*}
\beta \le 1 + \|(\Bh{H} - \B{H})(\B{I} + \B{H})^{-1}\|_2 \le 1 +  \|(\Bh{H} - \B{H})\|_2 \|(\B{I} + \B{H})^{-1}\|_2
\end{align*}
Observe that the eigenvalues of $\B{I} + {\B{H}}$ is bounded from below by $1$ and hence 
\begin{align*}
\beta \leq 1 + \|\B{E}\|_2\,.
\end{align*}
\item The special case $\B{E} \succeq \B{0}$ implies  $\Bh{H} \preceq \B{H}$ and, therefore, $\B{I} + \Bh{H} \preceq \B{I} + \B{H}$. By~\cite[Lemma V.1.7]{bhatia1997matrix}, $\|(\B{I} + \Bh{H})^{1/2}(\B{I} + \B{H})^{-1/2}\|_2 \leq 1$ which implies $\beta \le 1$.
\end{enumerate}  }
We have the desired result by combining the bounds for $\alpha$ and $\beta$. \qed
\end{proof}

\subsection{Analysis in expectation}
\label{ssec:prob}

In this subsection, we use the structural bounds to derive bounds in expectation for the three sketching methods from Section~\ref{ssec:sketching}. 
\begin{proof}[Proof of Theorem~\ref{thm:randsvd}]

Compute the sketch $\B{Y} = \B{A\Omega}$ and the thin QR factorization $\B{Y} = \B{QR}$. 
The low-rank approximation is $\Bh{H} = \B{A}^\top\B{QQ}^\top\B{A} = \Bh{V}\Bh\Lambda\Bh{V}\t$. We tackle each part individually.

Part 1. By Theorem~\ref{thm:solve}, the error in the solution depends only on the error in the low-rank approximation $\|\B{H}-\Bh{H}\|_2$. 
Using the fact that the orthogonal projector is symmetric and idempotent, we can rewrite this as 
\begin{equation*}
    \|\B{H}-\Bh{H}\|_2 = \|\B{A}^\top(\B{I}-\B{QQ}^\top)\B{A}\|_2 = \|(\B{I}-\B{QQ}^\top)\B{A}\|_2^2.    
\end{equation*}
Since $\range(\B{Y}) \subset \range(\B{Q})$, by~\cite[Proposition 8.5 and Theorem 9.1]{halko2011finding}, 
\begin{equation*}
    \|(\B{I}-\B{QQ}^\top)\B{A}\|_2^2 \leq \|(\B{I}-\B{YY}^\dagger)\B{A}\|_2^2 \leq \|\B\Sigma_2\|_2^2 + \|\B\Sigma_2\B\Omega_2\B\Omega_1^\dagger\|_2^2,    
\end{equation*}
{where $\B\Omega_1 = \B{V}_1\t \B\Omega$ and $\B\Omega_2 = \B{V}_2\t\B\Omega$. }
Taking expectations and using the fact that the spectral norm is dominated by the Frobenius norm
\begin{equation*}
    \expect{ \|\B{H}-\Bh{H}\|_2 } \leq  \> \|\B\Sigma_2\|_2^2 + \expect{\|\B\Sigma_2\B\Omega_2\B\Omega_1^\dagger\|_F^2}.    
\end{equation*}
From the proof of~\cite[Theorem 10.5]{halko2011finding}, $\expect{\|\B\Sigma_2\B\Omega_2\B\Omega_1^\dagger\|_F^2} \leq \frac{r}{p-1}\|\B\Sigma_2\|_F^2$. 
Plug the inequality into the above bound, and identify the singular values with the eigenvalues. 

Part 2. Since $\B{QQ}\t$ is an orthogonal projector, $\B{QQ}\t \preceq\B{I} $ and $\Bh{H} \preceq\B{H}$. The special case of Theorem~\ref{thm:bound_loew} applies and $  \kappa_2(( \B{I} + \Bh{H})^{-1/2}(\B{I} + \B{H})(\B{I} + \Bh{H})^{-1/2})     \leq 1 + \|\B{E}\|_2$. Taking expectations and applying the result from Part 1 gives us the desired bounds. \qed
\end{proof}

\begin{proof}[Proof of Theorem~\ref{thm:nystrom_solv}]
As before, we tackle each part separately.

Part 1. Follows from Theorem~\ref{thm:solve} and~\cite[Theorem 14.1]{martinsson2020randomized}.

Part 2. First, observe that 
\begin{equation*}
    \Bh{H} = \B{Y} (\B\Omega^\top\B{Y})^\dagger \B{Y}^\top = \B{H\Omega} (\B\Omega^\top\B{H\Omega})^\dagger \B{H\Omega}^\top = \B{H}^{1/2}\B{P}_{\B{H}^{1/2}\B\Omega} \B{H}^{1/2},
\end{equation*}
where $\B{P}_{\B{H}^{1/2}\B\Omega} = \B{H}^{1/2}\B{\Omega} (\B\Omega^\top\B{H\Omega})^\dagger \B{\Omega}^\top\B{H}^{1/2}$ is an orthogonal projector. 
This means $\B{P}_{\B{H}^{1/2}\B\Omega}\preceq \B{I}$ and $\Bh{H} \preceq \B{H}$. 
Therefore, the second part of Theorem~\ref{thm:bound_loew} applies and 
\begin{equation*}
    \expect{\kappa_2(( \B{I} + \Bh{H})^{-1/2}(\B{I} + \B{H})(\B{I} + \Bh{H})^{-1/2})} \leq 1 + \expect{ \|\B{E}\|_2}.    
\end{equation*}
The proof then follows from~\cite[Theorem 14.1]{martinsson2020randomized}. \qed
%
\end{proof}

\begin{proof}[Proof of Theorem~\ref{thm:singleview}] 
To prove this theorem, we first recognize that $\B{E} = \B{H}-\Bh{H}$ is no longer positive semidefinite, so we have to use the weaker result in \Cref{thm:bound_loew}. 

Part 1. By \Cref{thm:solve}, the error in the solution depends only on the error in the low-rank approximation $\|\B{E}\|_2$. 
Using triangle inequality and submultiplicativity
\begin{align*}
    \|\B{H}-\Bh{H}\|_2 = &\>  \|\B{A}^\top\B{A} - \B{A}^\top \Bh{A} + \B{A}^\top \Bh{A} - \Bh{A}^\top \Bh{A}\|_2 \\
    \leq &  \> \| \B{A}\|_2 \|\B{A}-\Bh{A}\|_2 + \|\B{A}-\Bh{A}\|_2 \|\Bh{A}\|_2. 
\end{align*}
Write $\Bh{A} = \Bh{A} - \B{A} + \B{A}$, and once again apply triangle inequality to get
\begin{equation*}
    \|\B{H}-\Bh{H}\|_2 \leq 2\|\B{A}\|_2 \|\B{A}-\Bh{A}\|_2 + \|\B{A}-\Bh{A}\|_2^2.    
\end{equation*}
The spectral norm is bounded by the Frobenius norm. 
Take expectations and apply Cauchy-Schwartz inequality to get 
\begin{equation}\label{eqn:singleviewinter} 
    \expect{\|\B{H}-\Bh{H}\|_2 } \leq 2\|\B{A}\|_2\left(\expect{ \|\B{A}-\Bh{A}\|_F^2}\right)^{1/2} + \expect{\|\B{A}-\Bh{A}\|_F^2}.
\end{equation}
By~\cite[Theorem 4.3]{tropp2017practical},
\begin{equation*}
    \expect{\|\B{A}-\Bh{A}\|_F^2} \leq (1 + f(\ell_1,\ell_2))(1 + f(r, \ell_1) ) \sum_{j > r}\sigma_j^2,
\end{equation*}
where $f(a,b) = a/(b-a-1)$. 
Take $\ell_1 = 2r + 1$, and $\ell_2 = 2\ell_1 + 1$ so that {$f(r, \ell_1) = f(\ell_1, \ell_2)  = 1$. 
This gives $\expect{\|\B{A}-\Bh{A}\|_F^2} \leq 4 \sum_{j > r}\sigma_j^2$.}
Plug into~\cref{eqn:singleviewinter} and simplify.

Part 2: By Theorem~\ref{thm:bound_loew},
\begin{equation*}
    \sqrt{\kappa_2((\B{I} + \Bh{H})^{-1/2}(\B{I} + \B{H})(\B{I} + \Bh{H})^{-1/2})}  \leq 1 + \|\B{E}\|_2.    
\end{equation*}
The rest of the proof is similar to Part 1. \qed
%
\end{proof}

\subsection{Condition number estimate}\label{ssec:cdest}

Here, we show that the quantity $\kappa_{\rm sk}$ discussed in \Cref{subsubsec:adap}, and in \cref{eqn:condtext2} is a 2-norm estimator of the condition number of the GN system discussed in \Cref{subsubsec:skprec}.
{\begin{proposition}\label{prop:condest}
    Let $\B{v}$ be a random vector uniformly distributed on the sphere $S^{n-1}$. Let $\kappa_{\rm sk} = \| (\B{I}+\B{H})(\B{I}+\Bh{H})^{-1}\B{v}\|_2$. Let $\B{E}\equiv \B{H}-\Bh{H} \succeq \B{0}$. Then, with probability at least $1- 0.8\theta^{-1/2}n^{1/2}$
    \[  \kappa_2((\B{I}+\Bh{H})^{-1/2}(\B{I}+\B{H})(\B{I}+\Bh{H})^{-1/2}) \le \sqrt{\theta} \kappa_{\rm sk}.    \]
\end{proposition}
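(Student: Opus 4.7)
The plan is to reduce the proposition to two independent pieces. First, a deterministic bound $\kappa_2\bigl((\B{I}+\Bh{H})^{-1/2}(\B{I}+\B{H})(\B{I}+\Bh{H})^{-1/2}\bigr) \le \|\B{K}\|_2$, where $\B{K} := (\B{I}+\B{H})(\B{I}+\Bh{H})^{-1}$, exploiting $\B{E}\succeq \B{0}$. Second, a probabilistic lower bound $\|\B{K}\B{v}\|_2 \ge \|\B{K}\|_2/\sqrt{\theta}$ holding with probability at least $1 - 0.8\theta^{-1/2} n^{1/2}$ for $\B{v}$ uniform on $S^{n-1}$ --- a classical Dixon-style small-ball estimate. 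Composing the two gives the claim, with $\kappa_{\rm sk}$ interpreted (as in the definition of $\kappa_{\rm sk}$ above) as $\|\B{K}\B{v}\|_2$.

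For the deterministic step, denote $\B{M} := (\B{I}+\Bh{H})^{-1/2}(\B{I}+\B{H})(\B{I}+\Bh{H})^{-1/2}$. From $\B{E} = \B{H}-\Bh{H} \succeq \B{0}$ we obtain $\B{I}+\Bh{H}\preceq \B{I}+\B{H}$; conjugating by $(\B{I}+\Bh{H})^{-1/2}$ yields $\B{M}\succeq \B{I}$, so $\lambda_{\min}(\B{M}) \ge 1$ and hence $\kappa_2(\B{M}) = \lambda_{\max}(\B{M})/\lambda_{\min}(\B{M}) \le \lambda_{\max}(\B{M}) = \|\B{M}\|_2$, using that $\B{M}$ is symmetric positive definite. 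Next, $\B{M}$ is similar to $\B{K}$ through the transformation $(\B{I}+\Bh{H})^{1/2}$, so $\lambda_{\max}(\B{M}) = \lambda_{\max}(\B{K})$; combined with the standard inequality spectral-radius $\le$ induced $2$-norm, this yields $\kappa_2(\B{M}) \le \|\B{K}\|_2$.

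For the probabilistic step, let $\B{w}$ be the top right singular vector of $\B{K}$, so that $\|\B{K}\B{v}\|_2 \ge \|\B{K}\|_2\,|\B{v}\t\B{w}|$. Since $\B{v}$ is uniform on $S^{n-1}$, the scalar $\B{v}\t\B{w}$ has the symmetric marginal density $c_n (1-t^2)^{(n-3)/2}$ on $[-1,1]$, with $c_n = \Gamma(n/2)/(\sqrt{\pi}\,\Gamma((n-1)/2))$. Wendel's inequality gives $c_n \le \sqrt{n/(2\pi)}$, which yields the crude small-ball bound $P(|\B{v}\t\B{w}| < s) \le 2sc_n \le s\sqrt{2n/\pi}$. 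Setting $s = 1/\sqrt{\theta}$ and noting $\sqrt{2/\pi} < 0.8$, we get $P(|\B{v}\t\B{w}| < 1/\sqrt{\theta}) \le 0.8\,\theta^{-1/2} n^{1/2}$. On the complementary event, $\|\B{K}\|_2 \le \sqrt{\theta}\,\|\B{K}\B{v}\|_2 = \sqrt{\theta}\,\kappa_{\rm sk}$, which chained with the first step proves the claim.

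The main obstacle is pinning down the constant $0.8$ precisely; this reduces entirely to the Gamma-function inequality $\Gamma(n/2)/\Gamma((n-1)/2) \le \sqrt{n/2}$, a one-line consequence of Wendel's inequality. The rest is Loewner-order bookkeeping powered by the hypothesis $\B{E}\succeq \B{0}$ and the similarity between $\B{M}$ and $\B{K}$, which is exactly what converts a probe on the non-symmetric $\B{K}$ into a condition-number statement on the symmetric preconditioned operator.
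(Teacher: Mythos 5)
Your proof is correct and, at the skeleton level, it is the paper's proof: a structural reduction, valid because $\B{E}\succeq\B{0}$, of the preconditioned condition number to $\|\B{K}\|_2$ with $\B{K}=(\B{I}+\B{H})(\B{I}+\Bh{H})^{-1}$, followed by a Dixon-type single-probe estimate of $\|\B{K}\|_2$ with failure probability $0.8\,\theta^{-1/2}n^{1/2}$. The differences are in execution. For the structural step the paper recycles the $\alpha\beta$ split from the proof of Theorem~\ref{thm:bound_loew} ($\beta\le 1$ via Bhatia's Lemma V.1.7, $\alpha\le\|\B{K}\|_2$ via Proposition IX.1.1), while you conjugate the Loewner inequality to get $\B{M}\succeq\B{I}$ and then use the similarity of $\B{M}$ and $\B{K}$ together with spectral radius $\le$ operator norm; this is equally rigorous and somewhat cleaner. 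For the probabilistic step the paper simply cites Dixon's theorem applied to $\B{K}\t\B{K}=(\B{I}+\Bh{H})^{-1}(\B{I}+\B{H})^2(\B{I}+\Bh{H})^{-1}$, whereas you re-derive that estimate from scratch via $\|\B{K}\B{v}\|_2\ge\|\B{K}\|_2\,|\B{v}\t\B{w}|$, the marginal density of $\B{v}\t\B{w}$, and Wendel's inequality, recovering the constant $\sqrt{2/\pi}<0.8$; the only caveat is that bounding that density by $c_n$ requires $n\ge 3$ (for $n=2$ the density is unbounded at $\pm1$, though the final bound still holds), a harmless restriction that citing Dixon sidesteps. Finally, your reading of $\kappa_{\rm sk}$ as $\|(\B{I}+\B{H})(\B{I}+\Bh{H})^{-1}\B{v}\|_2$, i.e., as in \eqref{eqn:condtext}, is the one the argument actually delivers—and is what the paper's own quadratic form $\B{v}\t\B{K}\t\B{K}\B{v}$ produces; the extra power of $(\B{I}+\B{H})$ in the proposition's definition of $\kappa_{\rm sk}$, like the $\Bh{H}$ appearing in the middle factor of the displayed condition number, is a typo in the statement rather than a gap in your argument.
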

\begin{proof}
From the proof of Theorem~\ref{thm:bound_loew}, we have $ \|(\B{I}+\Bh{H})^{1/2}(\B{I}+\B{H})^{-1}(\B{I}+\Bh{H})^{1/2}\|_2 \le 1$, so it is sufficient to find a bound for $ \|(\B{I}+\Bh{H})^{-1/2}(\B{I}+\B{H})(\B{I}+\Bh{H})^{-1/2}\|_2$. As in the proof of Theorem~\ref{thm:bound_loew}, 
\[ \|(\B{I}+\Bh{H})^{-1/2}(\B{I}+\B{H})(\B{I}+\Bh{H})^{-1/2}\|_2 \le \|(\B{I}+\B{H})(\B{I}+\Bh{H})^{-1}\|_2, \]
so now 
\[ \kappa_2((\B{I}+\Bh{H})^{-1/2}(\B{I}+\B{H})(\B{I}+\Bh{H})^{-1/2}) \le \|(\B{I}+\B{H})(\B{I}+\Bh{H})^{-1}\|_2.\] 
Apply~\cite[Theorem 1 and corollary]{dixon1983estimating} with $\B{A} =(\B{I}+\Bh{H})^{-1}(\B{I}+\B{H})^2(\B{I}+\Bh{H})^{-1} $ and $k=1$ to get
\[ \kappa_2((\B{I}+\Bh{H})^{-1/2}(\B{I}+\B{H})(\B{I}+\Bh{H})^{-1/2}) ^2 \le \theta \| (\B{I}+\B{H})(\B{I}+\Bh{H})^{-1}\B{v}\|_2^2,\]
with probability at least $1-0.8\theta^{-1/2}n^{1/2}$. Take square roots to complete the proof. 
\qed
\end{proof}}
As has been mentioned earlier, the assumption $\B{E} \succeq \B{0}$ is satisfied by the RandSVD and the Nystr\"om approaches but is not satisfied by the SingleView approximation. In this case, Proposition~\ref{prop:condest} is not a condition number estimator but a norm estimator for the matrix $(\B{I}+\Bh{H})^{-1/2}(\B{I}+\Bh{H})(\B{I}+\Bh{H})^{-1/2}$. This result provides some justification for the condition number estimate $\kappa_{\rm sk}$ defined in~\eqref{eqn:condtext}.

\section{Numerical Experiments}

This section describes experiments on two different PDEs, namely
\begin{enumerate*}[label={(\roman*)}]
    \item the 1-D Burgers equation, and 
    \item barotropic vorticity equation.
\end{enumerate*}
MATLAB 2023a was used to perform the numerical experiments.
We use a custom GN solver with the Mor{\'e}-Thuente line search~\cite[]{more1994line} from POBLANO~\cite[]{dunlavy2010poblano}.
The settings for each experiment are detailed in the corresponding subsections and summarized in \Cref{tab:expt0_settings} respectively.
\begin{table}[!ht]
    \centering
    \begin{tabular}{ c | c | c }
      Setting & Application 1  & Application 2 \\ \hline
        Domain & $x \in (0, 1)$ & $(x, y) \in (0, 1)\times(-1, 1)$ \\ 
        Parameters & $\nu\in \{10^{-3},10^{-2}, 10^{-1},1\}$ & Re = 200, Ro = 0.0016 \\ 
        State size & $n \in \{199, 399, 599, 799\}$ & $n = 128 \times 257 = 32896$ \\
        $\Delta t$ & 0.01 & 0.001225 (around 6 hours) \\
        $n_t$ & 20 & 8 \\
        $n_{\rm obs}$ & 15 & 256 \\
        $\B\gampr$ & $(0.5\B{I} - 500\B\Delta^*)^{-2}$ & $(-0.06\B\Delta^*)^{-2}$ \\
        $\B{R}_i, 1 \leq i \leq n_t $& $0.01\cdot \B{I}_{15}$ & $361\cdot \B{I}_{256}$\\
        Relative $\|\nabla \mc{J}\|_\infty$ tolerance & $ 10^{-6}$ & $10^{-6}$\\
        PCG tolerance & $10^{-9}$ & $10^{-9}$\\
        \hline
    \end{tabular}
    \caption{Summary of experiment settings. $\B\Delta^*$ refers to the discrete Laplacian operator with Dirichlet boundary conditions: the 3-point and 5-point stencil for Experiments 1 and 2 respectively.   }
    \label{tab:expt0_settings}
\end{table}
We define the relative error of the analysis trajectory as $\frac {\| \B{x}^a_i - \B{x}^{\rm true}_i \|_2}{ \| \B{x}^{\rm true}_i \|_2}$ at each instance of $i \Delta t$ where $ 0 \leq i \leq n_k$. 
Here, $\B{x}^a_i$ is the analysis $\B{x}^a_0$ forecasted to time $t_i$, $\B{x}^{\rm true}_i$ is the true solution at $t_i$, and $n_k$ is a discrete number of time instances we compare the two values. 

Note that we choose $n_k > n_t$, meaning we consider the forecasting potential of the $\B{x}^{a}_0$ without any observations after the observation window has elapsed. 
For the test problems in Experiments 1.1 and 2, we report 
\begin{enumerate*}[label={(\roman*)}]
    \item the total number of PCG iterations that are required to obtain the solution of the SC-4DVAR optimization problem,
    \item the total number of parallelizable (or offline) TLM $\B{A}$ and ADJ $\B{A}\t$ solutions that are required to create sketches (SketchSolv or SketchPrec) across the full optimization,
    \item the total number of sequential (or online) TLM and ADJ solves used in the computation of the gradient (SketchSolv and SketchPrec), GN Hessian (SketchPrec only), and  
    \item the total number of PDE solves including those done inside the line-search.
\end{enumerate*}
The situation without a preconditioner is referred to as $\textrm{Prec}\_\B{\gampr}$ to indicate that it only uses the first-level preconditioning.
The Lanczos algorithm is initialized (as the starting vector) with the right-hand side of the linear system in \cref{eqn:pcgsystem}.
Using a random starting vector also works, but the random choice results in slower PCG convergence.
In this section, $\Bh{H}$ generated by the Lanczos method is loosely referred to as a sketch, and its Krylov subspace dimension is called the sketch size.

\subsection{Application 1: 1-D Burgers' Equation}
\label{ssec:burg}
The first application we consider is the non-linear, one-dimensional viscous Burgers' equation. This equation describes the non-linear advection and diffusion of a fluid in space and time.
Some of the problem settings are inspired by the advection-diffusion example from~\cite{freitag2018low}. 
Consider the PDE
\begin{equation}\label{eqn:1dburgers}
    \frac{\partial u}{\partial t} +  u \frac{\partial u}{\partial x} = \nu \frac{\partial^2 u}{\partial x^2} ,
\end{equation}
with domain $x \in (0, 1)$, diffusion coefficient $\nu = 0.1$, and homogeneous Dirichlet boundary conditions $u(t, 0) = u(t, 1) = 0$. 
The spatial derivatives are computed using second-order central finite differences on a uniform mesh. 
The time stepping uses a 3rd-order, explicit, total variation diminishing Runge Kutta~\cite[]{gottlieb1998tvd} scheme called RK3.
For Experiments 1.1 and 1.2, we use the grid size $n = 399$, giving $\Delta x = 0.0025$, and for Experiments 1.3 and 1.4, we use $n \in \{ 199, 399, 599, 799\}$.
For Experiments 1.1 and 1.2, the RK3 timestep $\Delta t_{ts} = 2.5 \times 10^{-5}$. 
For experiments 1.3 and 1.4, $\Delta t_{ts}$ is chosen sufficiently small based on problem settings to have stable solutions.
The discrete Runge Kutta TLM and ADJ~\cite[]{Sandu_PA2006a} are used to compute the gradients and Hessian vector products. 

\paragraph{Data and Background} For all experiments, the true initial condition $\B{x}^{\rm true}_0$ is set based on the discrete representation of $\sin(\pi x)$.
In standard SC-4DVAR fashion~\cite[]{evensen2022data,freitag2018low,dauvzickaite2021randomised}, the background estimate is chosen by adding a background scaled perturbation to the true initial condition as $\B{x}^b_0 = \B{x}^{\rm true}_0 + \B\gampr^{\frac{1}{2}} \B\xi$, with the covariance matrix $\B\gampr = (0.5\B{I} - 500\B\Delta^*)^{-2}$ where $\B\Delta^*$ is the discrete Laplacian with homogeneous Dirichlet boundary conditions and $\B\xi \sim \mc{N}(\B{0}, \B{I})$.
The parameters of the covariance $\B\gampr$ are hand-tuned.

Observations are recorded for a window of $n_t = 20$ time instances, each instance separated by an interval of $\Delta t = 0.01$ time units.
For all experiments (1.1 through to 1.4), we observe 15 states at the x-intercepts of the observations shown in \cref{fig:expt1-example}.
The observation error for all experiments (1.1 through to 1.4) is $\B{R}_i = 0.01 \cdot \B{I}_{15}$ for $1 \leq i \leq n_t$.
\Cref{fig:expt1-example} shows an example of the true solution, observations at the locations, and a background estimate.
\begin{figure}
    \centering
    \includegraphics[width=0.5\linewidth]{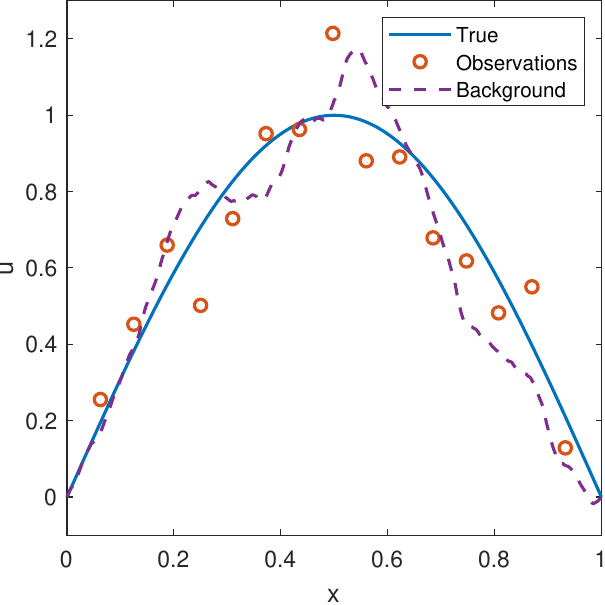}
    \caption{An illustration depicting the true states, its noisy observation at time $0$, and a background estimate.}
    \label{fig:expt1-example}
\end{figure}
\paragraph{Optimization and PCG settings} 
The optimizer for every variant of the GN method is terminated when the norm of the gradient at the $k$-th GN iteration decreases below a specified tolerance relative to the norm of the first gradient (i.e. when $\frac{\|\nabla \mc{J}^{(k)} \|_\infty}{\|\nabla \mc{J}^{(0)} \|_\infty} < 10^{-6}$). 
The optimization is initialized with the background $\B{x}_0^{(0)} = \B{x}^b_0$. PCG for computing the descent direction is terminated when the relative residual of the $k$-th iterate achieves a prespecified tolerance (here $10^{-9}$).

\subsubsection{Experiment 1.1: Comparing the sketching approaches}

We look at the cost and the relative analysis error trajectory with $n_k = 81$ for the following methods
\begin{enumerate*}[label=(\roman*)]
    \item basic : Prec\_$\B{\gampr}$, Prec\_Lanczos, Solv\_Lanczos, 
    \item SketchPrec : RandSVD, Nystr\"om, and SingleView,
    \item SketchSolv : RandSVD, Nystr\"om, and SingleView.
\end{enumerate*}
The sketch sizes are $\ell = 15$ for Lanczos, RandSVD and Nystr\"om, and $\ell_1 = 15, \ell_2 = 31$ for SingleView. The sketches are updated at each GN iteration; we explore the adaptive approach in the next experiment in \Cref{ssec:bve}.
\begin{figure}[!ht]
    \centering
    \begin{subfigure}[b]{0.49\textwidth}
        \centering
        \includegraphics[width=\linewidth]{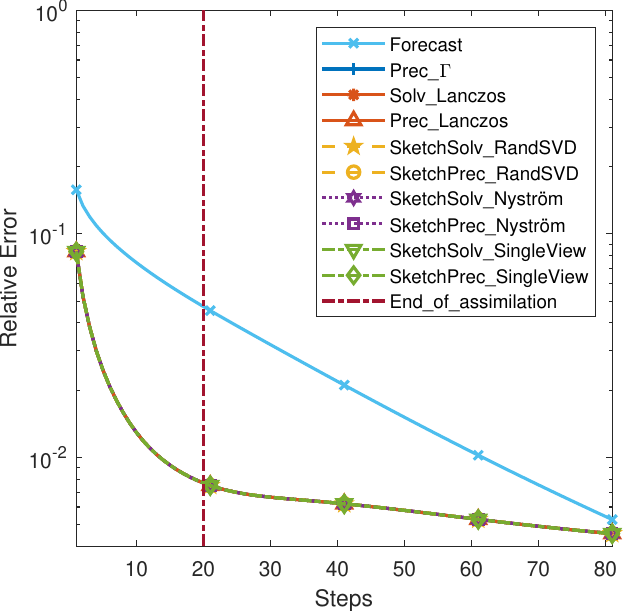}
        \caption{Relative error for 1D-Burgers}
        \label{fig:expt1_rmse}
    \end{subfigure}
    \hfill
    \begin{subfigure}[b]{0.49\textwidth}
        \centering
        \includegraphics[width=\linewidth]{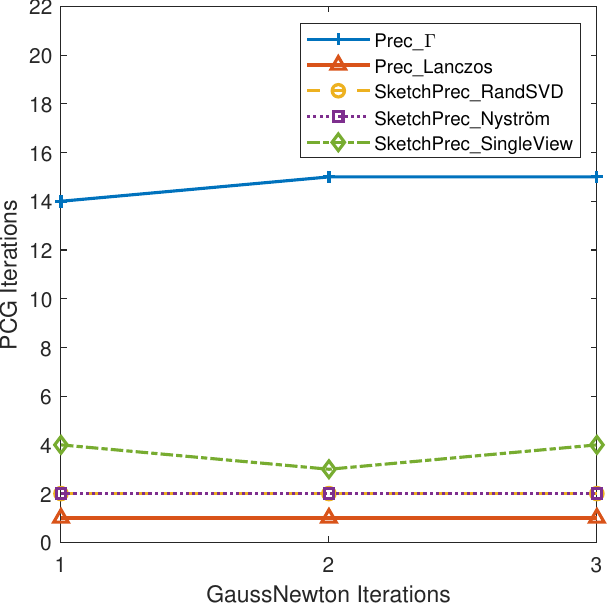}
        \caption{GaussNewton vs PCG iterations.}
        \label{fig:expt1_ncg}
    \end{subfigure}
    \caption{The relative error in \Cref{fig:expt1_rmse} has similar behavior in time across the different methods. \Cref{fig:expt1_ncg} shows a decrease in the number of PCG iterations for the preconditioned methods.}
    \label{fig:expt1_fig1}
\end{figure}
\begin{table}[!ht]
    \centering
    \begin{tabular}{ c | c | c | c | c | c | c}
       Method & \#PCG & \multicolumn{2}{c|}{Par/Offline/Setup} & \multicolumn{3}{c}{Seq/Online} \\ \hline        & \thead{Iterations} & \thead{\#TLM} & \thead{\#ADJ} & \thead{\#FWD} & \thead{\#TLM} & \thead{\#ADJ} \\\hline
       \multicolumn{7}{c}{Basic} \\\hline
        Prec\_$\B{\gampr}$ & 44 & --- & --- & 8 & 44 & 47\\
        Prec\_Lanczos & 3 & --- & --- & 8 & 48 & 51 \\
        Solv\_Lanczos & {---} & --- & --- & 8 & 45 & 48 \\ \hline   
    \multicolumn{7}{c}{SketchPrec} \\\hline  
        RandSVD & 6 & 45 & 45 & 8 & 6 & 9 \\
        Nystr\"om & 6 & 45 & 45 & 8 & 6 & 9 \\
        SingleView & 11 & 45 & 93 & 8 & 11 & 14 \\\hline
    \multicolumn{7}{c}{SketchSolv} \\\hline
        RandSVD & {---} & 45 & 45 & 8 & 0 & 3 \\
        Nystr\"om & {---} & 45 & 45 & 8 & 0 & 3  \\
        SingleView & {---} & 45 & 93 & 10 & 0 & 3 
    \end{tabular}
    \caption{The comparison of proposed methods for the 1-D Burgers equation. All methods take 3 GN iterations. The sketch parameters are $\ell = \ell_1 = 15$ and $\ell_2 = 31$. The RandSVD and Nystr\"om methods need 3 batches of $\ell = 15$ TLM and ADJ evaluations, and the $\ell = 15$ evaluations for each batch are done in a completely parallel way (but the ADJ evaluations must be done after the TLM). SingleView allows each batch of TLM and ADJ evaluations to be done simultaneously in parallel at an extra cost of $\ell_2 - \ell_1$ ADJ evaluations.}
    \label{tab:expt1_comp}
\end{table}
\Cref{fig:expt1_rmse} shows that all the variants of the GN method have the same relative analysis trajectory error, which is lower than the forecast error (whose trajectory is obtained by evolving the background without any assimilation).  
The relative error difference between the analysis trajectory and background becomes smaller over time, due to the diffusive nature of the chosen setting.
\Cref{fig:expt1_ncg} shows the total number of PCG iterations for different approaches. 
\Cref{tab:expt1_comp} provides a more quantitative picture of the full solution cost.
Concerning SketchPrec, the different preconditioned approaches require fewer PCG iterations compared with that of Prec\_$\B{\gampr}$ solution. 
For this small-scale problem, there aren't too many computational gains from using the Lanczos preconditioner, whereas the randomized methods allow for parallelism in the TLM and ADJ computations. Concerning the SketchSolv approach, the gains are again with the parallelization of the randomized methods. The randomized methods RandSVD and Nystr\"om sketches have similar behavior for both SketchSolv and SketchPrec, while SingleView is slightly more expensive to construct and requires more iterations, but at the same time, has a higher potential for parallelism (which is consistent with the theory). While not presented here, we also observed that the randomized methods are robust to the choice of random vectors (generated from different random seeds).

\subsubsection{Experiment 1.2: Condition Numbers and PCG iterations}

\begin{figure}[!ht]
    \centering
    \begin{subfigure}[b]{0.49\textwidth}
        \centering
        \includegraphics[width=\textwidth]{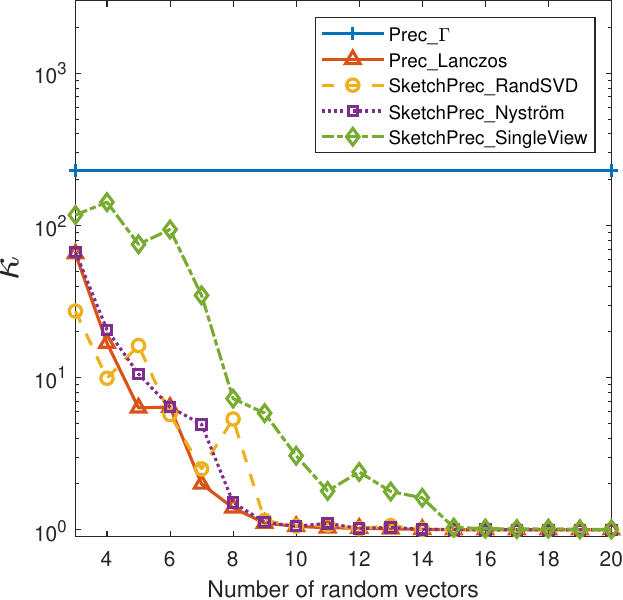}
        \caption{Condition number of $\MB{H}^{(0)}$. }
        \label{fig:expt1_cond}
    \end{subfigure}
    \hfill
    \begin{subfigure}[b]{0.49\textwidth}
        \centering
        \includegraphics[width=0.98\textwidth]{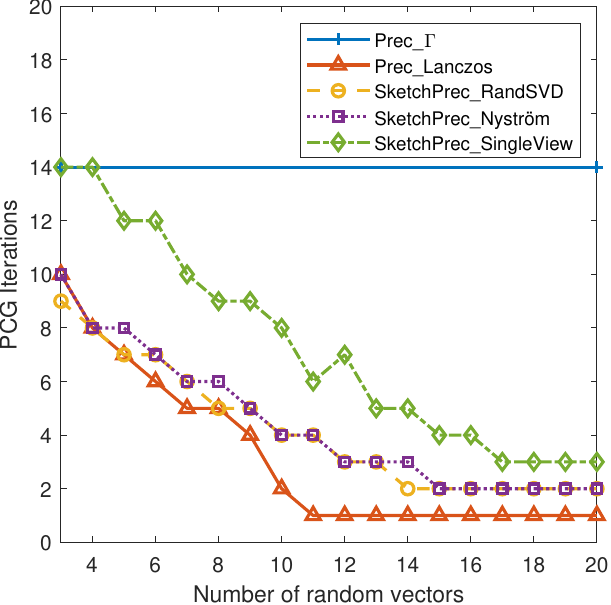}
        \caption{Total number of PCG Iterations}
        \label{fig:expt1_cg}
    \end{subfigure}
    \caption{Influence of the sketch size on the preconditioning of GN Hessian on the first GN iteration. The sketch size here refers to $\ell$ (RandSVD and Nystr\"om) and $\ell_1$ (SingleView).}
    \label{fig:expt1_randveccond}
\end{figure}
We retain the problem settings of Experiment 1, varying only the sketch sizes $\ell$ (Lanczos, RandSVD, Nystr\"om) and $\ell_1$ (SingleView), with $\ell_2$ constrained as $\ell_2 = 2\ell_1 + 1$ on the conditioning of {$(\B{I} + \Bh{H}^{(0)})^{-1}(\B{I} + \B{H}^{(0)}(\B{x}_0^{(0)}))$ (at the first iterate of the GN Hessian as in \cref{eqn:gniterf1}, where $\B{x}_0^{(0)} = \B{x}_0^{b} $)  and the total number of PCG iterations to solve the system $(\B{I} + \B{H}^{(0)})(\B{x}_0^{(0)}) \delta\B{x}^{(0)} = - \B{\gampr}^{1/2}\B{g}^{(0)}(\B{x}_0^{(0)})$.}

In \Cref{fig:expt1_cond}, we observe that a larger sketch size results in a better condition number and that the condition number is much lower than without a preconditioner. 
As expected from the theory in Section~\ref{sec:analysis}, using a larger sketch size to construct the preconditioners improves the system's conditioning.
In \Cref{fig:expt1_cg}, we see that a larger sketch size results in fewer PCG iterations, and the randomized preconditioners improve on the first-level preconditioner.
Between the different sketching methods, RandSVD and Nystr\"om require slightly smaller sketch sizes, and ultimately fewer TLMs/ADJs, than the SingleView method for convergence.
The Lanczos sketches make PCG converge in 1 step as they approximate the Krylov subspace of the system.

\subsubsection{Experiment 1.3: Conditioning of the GN Hessian}
\begin{figure}[!ht]
    \centering
    \begin{subfigure}[b]{0.49\textwidth}
    \centering
    \includegraphics[width=\linewidth]{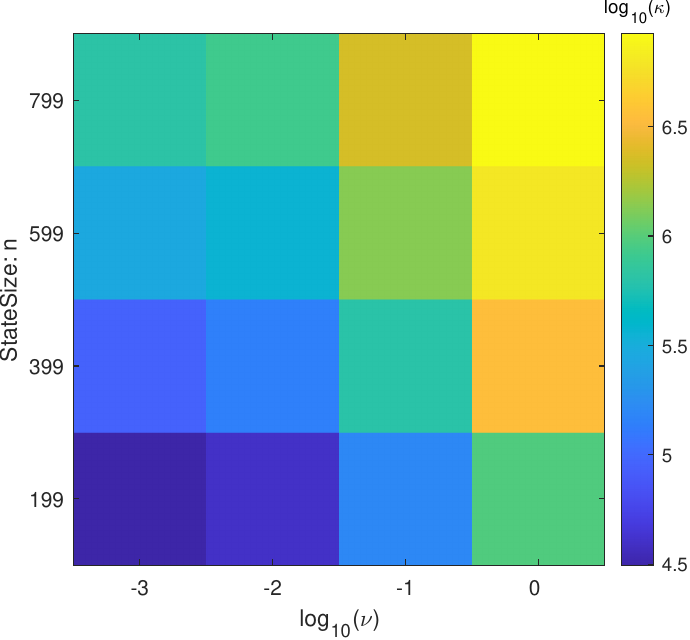}
    \caption{Log-condition number of $\MB{H}^{(0)}(\B{x}_0^{(0)})$.}
    \label{fig:expt1_condnv}    
    \end{subfigure}
    \begin{subfigure}[b]{0.49\textwidth}
    \centering
    \includegraphics[width=\linewidth]{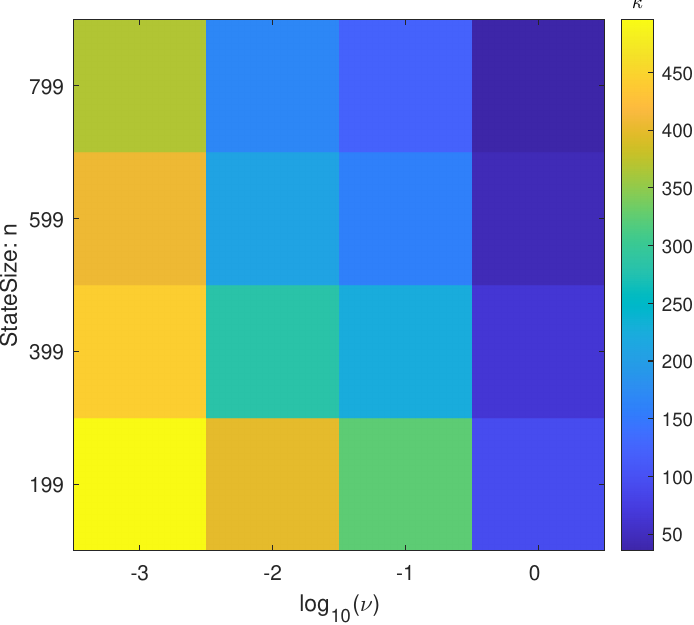}
    \caption{Condition number $\B{I} + \B{H}^{(0)}(\B{x}_0^{(0)})$.}
    \label{fig:expt1_condnv2}    
    \end{subfigure}
    \hfill
    \par
    \bigskip
    \begin{subfigure}[b]{0.49\textwidth}
        \centering
        \includegraphics[width=\linewidth]{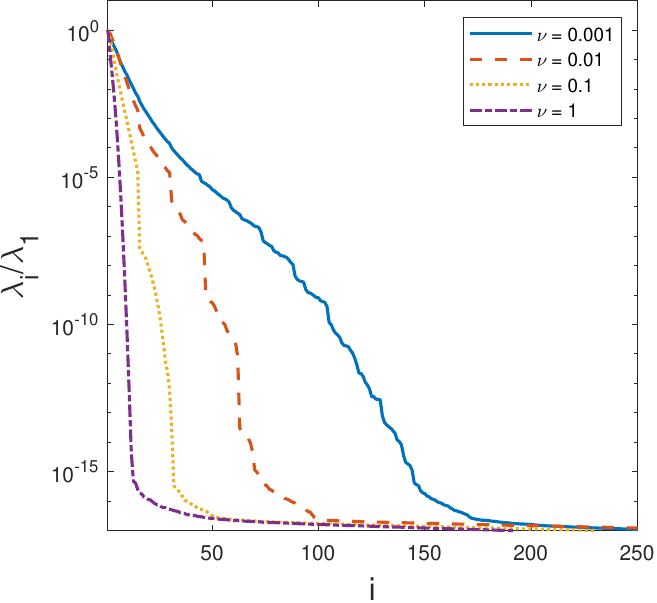}
        \caption{Relative spectral decay of $\B{H}^{(0)}(\B{x}_0^{(0)})$ for $n = 399$ }
        \label{fig:expt1_spec1}
    \end{subfigure}
    \hfill
    \begin{subfigure}[b]{0.49\textwidth}
        \centering
        \includegraphics[width=\linewidth]{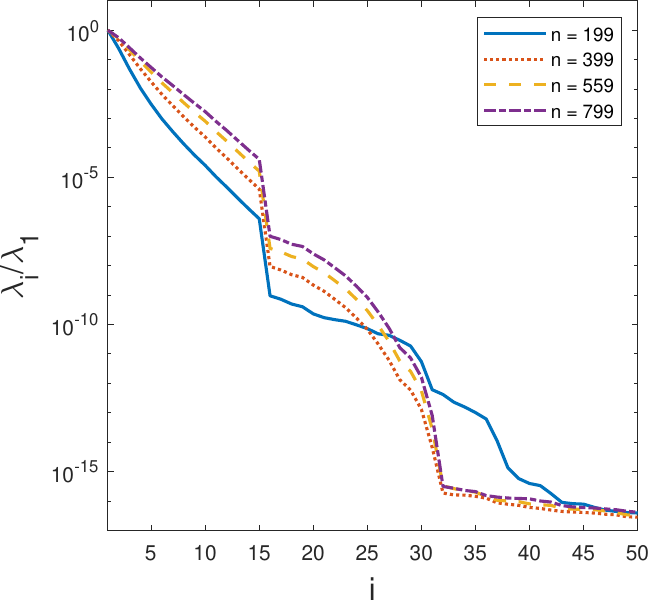}
        \caption{Relative spectral decay of $\B{H}^{(0)}(\B{x}_0^{(0)})$ for $\nu = 0.1$}
        \label{fig:expt1_spec2}
    \end{subfigure}
    \caption{Conditioning and spectral decay for different settings of the Burgers equation. Here, $i$ indexes $\sigma_i$ which is the $i$th largest eigenvalue.}
    \label{fig:expt1_condspec}
\end{figure}

{We now look at the condition number of $\MB{H}^{(0)}(\B{x}_0^{(0)})$ and $\B{I} + \B{H}^{(0)}(\B{x}_0^{(0)})$, by varying the discretization or state size ($n$) and diffusion coefficient ($\nu$).} 
To ensure consistency across the experiments, the background state $\B{x}_0^b$ is first computed for the finest grid $n = 799$, and its values are interpolated for the other grid sizes $n \in \{199, 399, 599\}$.
The background covariance chosen as in the previous experiments produces initial conditions that are smooth enough to behave similarly for all these discretizations. 
The observations are collected at the same spatial locations at all discretization levels (matching the observation x-locations in Figure~\ref{fig:expt1-example}) and do not change in value across the experiments.
{\Cref{fig:expt1_condnv,fig:expt1_condnv2} show the dependence of the condition number of $\MB{H}^{(0)}(\B{x}_0^{(0)})$ on the logarithm of diffusion coefficient $\nu$ and state size $n$. 
The dependence of the scaled eigenvalues of the regularized data misfit Hessian $\B{H}^{(0)}(\B{x}_0^{(0)})$ on $n$ and $\nu$ is shown in \Cref{fig:expt1_spec1}. }
We make {a few} observations here:
\begin{enumerate}
    \item The eigenvalues show a sharper decay with increased diffusion as seen in \Cref{fig:expt1_spec1}, and the condition number increases with a smaller diffusion coefficient $\nu$ (less diffusive) as seen from the rows of \Cref{fig:expt1_condnv}.
    \item The eigenvalue decay is similar across multiple values of $n$ as seen in \Cref{fig:expt1_spec2}, and the condition number increases with larger grid sizes as seen from the columns of \Cref{fig:expt1_condnv}.
    \item Experimentally, we see that the eigenvalue decay for systems with $\nu = 0.01, 0.1, 1$ in \Cref{fig:expt1_condspec} is consistent with the spectra of severely ill-posed problems (cf. Section~\ref{ssec:analysis_sum}). 
\end{enumerate}

\begin{figure}[!ht]
    \centering
    \includegraphics[width=\linewidth]{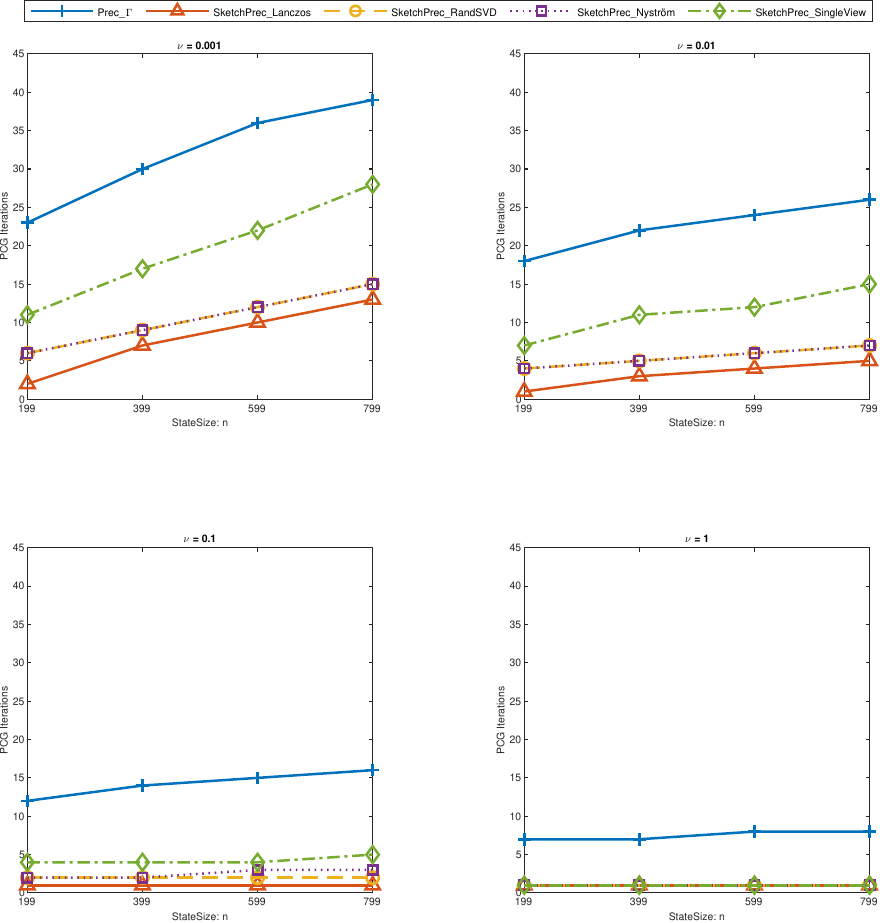}
    \caption{Effect of mesh refinement}
    \label{fig:expt1_gi}
\end{figure}

\subsubsection{Experiment 1.4: Effect of mesh refinement}
In this experiment, we use the settings of the previous experiment with sketch sizes $\ell = \ell_1 = 15$ and $\ell_2= 2\ell_1 +1$.
We report the number of PCG iterations using different methods to solve for the first descent direction in \Cref{fig:expt1_gi}.
For larger values of the diffusion coefficient $\nu$ $(\nu$ in the range $0.1-1$), no approach shows a significant change in the number of PCG iterations at different grid sizes. 
However, as the diffusion $\nu$ decreases ($\nu$ in the range $0.001-0.01$), the PDE becomes less diffusive, and the number of PCG iterations for all the methods shows a mild increase with increasing grid sizes. 
Therefore, the preconditioners only have grid-independent behavior for more diffusive regimes and show grid-dependent behavior for less diffusive regimes. 
In all the cases considered, the sketched preconditioners result in fewer PCG iterations compared to {using only} the background covariance as the preconditioner for the same grid size.

\subsection{Application 2: Barotropic vorticity equation}
\label{ssec:bve}
For the second application, we consider the barotropic vorticity equation, which represents a large-scale, turbulent, wind-driven circulation in a shallow oceanic basin~\cite[]{san2011qg}, that is a typical test problem for data assimilation~\cite[]{Talagrand_1987,evensen1994sequential,vanLeeuwen_2015,popov2021multifidelity}.
We follow the non-dimensional formulation as described in~\cite{san2011qg}:
\begin{equation}
\begin{split}
    \frac{\partial \omega}{\partial t} + \left( \frac{\partial \psi}{\partial y} \frac{\partial \omega}{\partial x} - \frac{\partial \psi}{\partial x} \frac{\partial \omega}{\partial y} \right) - \mathrm{Ro}^{-1} \frac{\partial \psi}{\partial x} &= \mathrm{Re}^{-1} \left( \frac{\partial^2 \omega}{\partial x^2} + \frac{\partial^2 \omega}{\partial y^2} \right) + \mathrm{Ro}^{-1} F,\\
    \frac{\partial^2 \psi}{\partial x^2} + \frac{\partial^2 \psi}{\partial y^2} &= - \omega,
\end{split}
\end{equation}
where $(x, y) \in \Omega = (0, 1)\times(-1, 1)$,  $\omega$ is the vorticity, $\psi$ is the stream function, $\mathrm{Re} = 200$ is the Reynolds number, $\mathrm{Ro} = 0.0016$ is the Rossby number, and a symmetric double gyre forcing function $F(x,y) = \sin(\pi y)$ and homogeneous Dirichlet boundary conditions (i.e., $\psi|_{\Omega} = \omega|_{\Omega} = 0$).

The PDE is spatially discretized on a uniform mesh using second-order central finite differences. We have the number of grid points $n = 128 \times 257 = 32,896$ with a grid spacing $\Delta x = \Delta y = 0.0078$.
A 3rd-order, explicit, total variation diminishing Runge Kutta~\cite[]{gottlieb1998tvd} scheme is used to evolve the semidiscretized PDE in time with a timestep of $\Delta t_{ts} = 0.000245$ time units.
The discrete Runge Kutta TLM and ADJ~\cite[]{Sandu_PA2006a} are used to compute the gradients and Hessian vector products. 
\begin{figure}[!ht]
    \centering
    \includegraphics[scale=0.65]{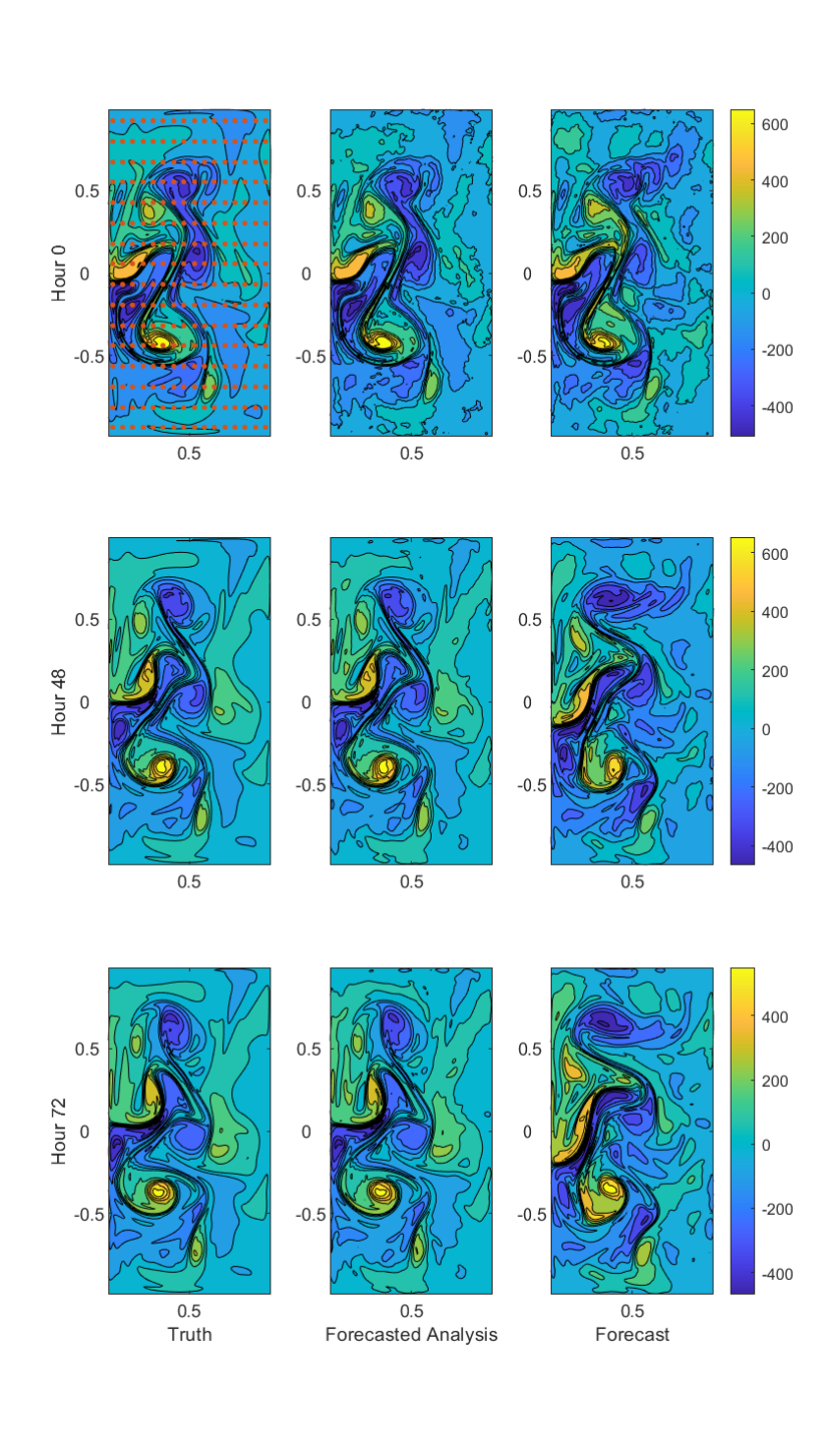}
    \caption{Time snapshots of the true vorticity, forecast, and the forecasted analysis. The assimilation cost considers the data from hour 6 to hour 48. The true data at hour 0 also shows the location of the observation sensors.}
    \label{fig:expt2-snap}
\end{figure}

\paragraph{Data and Background} The true initial condition is chosen after evolving a random, isotropic initial field through the system until a consistent set of gyres is obtained; see Figure~\ref{fig:expt2-snap}.
As standard in SC-4DVAR literature~\cite{evensen2022data,freitag2018low,Bousserez2020enhanced,dauvzickaite2021randomised}, the background estimate is chosen as $\B{x}^b_0 = \B{x}^{\rm true}_0 + \B\gampr^{\frac{1}{2}} \B\xi$, where the covariance matrix $\B\gampr = (-0.06 \B\Delta^*)^{-2}$, and $\B\Delta^*$ is the discrete 2-D Laplacian with homogeneous Dirichlet boundary conditions, and $\B{\xi} \sim \mc{N}(\B{0}, \B{I})$. %
The system is observed every $\Delta t = 0.001225$ non-dimensional time unit (corresponding to roughly $6$ hours of oceanic flow) for $n_t = 8$ sequential instances. 
The data is collected on a sensor network $16 \times 16$ grid, equally spaced in each dimension, as displayed in \cref{fig:expt2-snap}. 
The observation error covariance is $\B{R}_i = 361\cdot\B{I}_{256}$ for $1 \leq i \leq n_t$ (the parameter is chosen to be around $1$ \% of the maximum auto-variance.); similarly, the data is corrupted with the observational error drawn from $\B\epsilon_i^{\rm obs} \sim \mc{N}(\B{0},\B{R}_i)$.

\paragraph{Optimization and PCG settings} 
{While both SketchPrec and SketchSolv methods were tested, SketchSolv failed to converge within a reasonable amount of wall-time (likely due to the inexactness in the descent direction) and its results are not reported.}
As before, the optimization is initialized with the background $\B{x}_0^{(0)} = \B{x}^b_0$ and terminated when $\frac{\|\nabla \mc{J}^{(k)} \|_\infty}{\|\nabla \mc{J}^{(0)} \|_\infty} < 10^{-6}$. 
PCG is terminated when the relative residual of the $k$-th iterate achieves a prespecified tolerance (here, $10^{-9}$). 
We compare the methods
\begin{enumerate*}[label=(\roman*)]
    \item basic : Prec\_$\B{\gampr}$, Prec\_Lanczos, PrecA\_Lanczos, 
    \item SketchPrec : RandSVD, Nystr\"om, and SingleView,
    \item SketchPrecA : RandSVD, Nystr\"om, and SingleView.
\end{enumerate*}
For the fixed sketch size case, we take $\ell = \ell_1 = 384$, and for the adaptive case, we take $\epsilon_{\rm re} = 10$ and $\epsilon_{\rm sk} = 1.01$.

PrecA\_Lanczos refers to the adaptive method (in both sketch size and decision). The adaptivity in sketch size requires terminating the Lanczos procedure when {the criterion  $\kappa_{\rm sk} \le \epsilon_{\rm sk}$ (see, \cref{eqn:condtext}) is met.} It is similar to \cref{alg:arandsvd} with an initial $\ell$ Lanczos iterations, after which we test for the sketch size adaptivity condition. If it fails, we increment the Lanczos subspace by $\hat{\ell}$ more vectors and repeat till the condition is satisfied. 

\begin{figure}[!ht]
    \centering
    \begin{subfigure}[b]{0.49\textwidth}
        \centering
        \includegraphics[width=\linewidth]{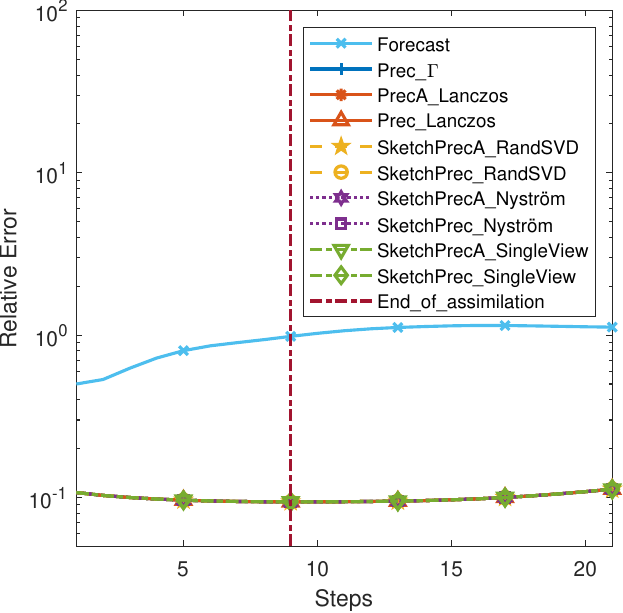}
        \caption{Relative Error for BVE.}
        \label{fig:expt2_relrmse}
    \end{subfigure}
    \hfill
    \begin{subfigure}[b]{0.49\textwidth}
        \centering
        \includegraphics[width=\linewidth]{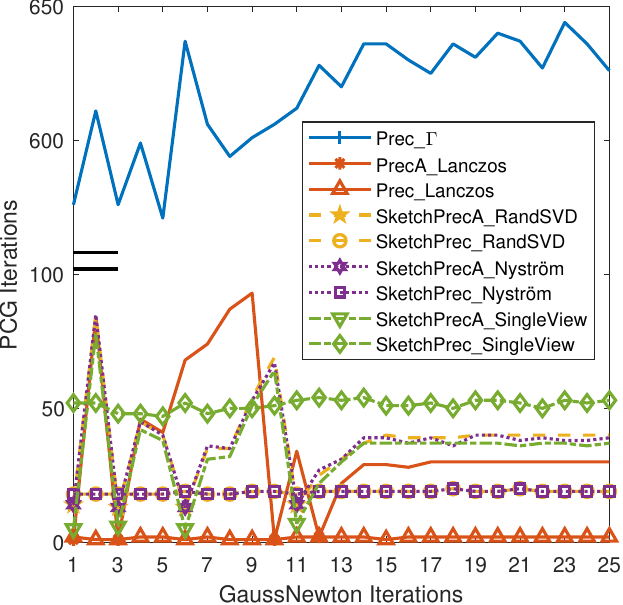}
        \caption{GN iterations vs PCG iterations.}
        \label{fig:expt2_ncg}
    \end{subfigure}
    \caption{Plot showing the relative error and PCG iterations. In \cref{fig:expt2_ncg}, the marker indicates the decision to sketch. { PrecA\_Lanczos sketchs at the GN iterations \{1, 3, 10, 12\}, and all SketchPrecA methods sketch at GN iterations\{1, 3, 6, 11\}.} Also, note the jump in the y-axis at 100 for \cref{fig:expt2_ncg}. }
    \label{fig:expt2}
\end{figure}  

\begin{figure}[!ht]    
    \centering
    \begin{subfigure}[b]{0.49\textwidth}
        \centering
        \includegraphics[width=\linewidth]{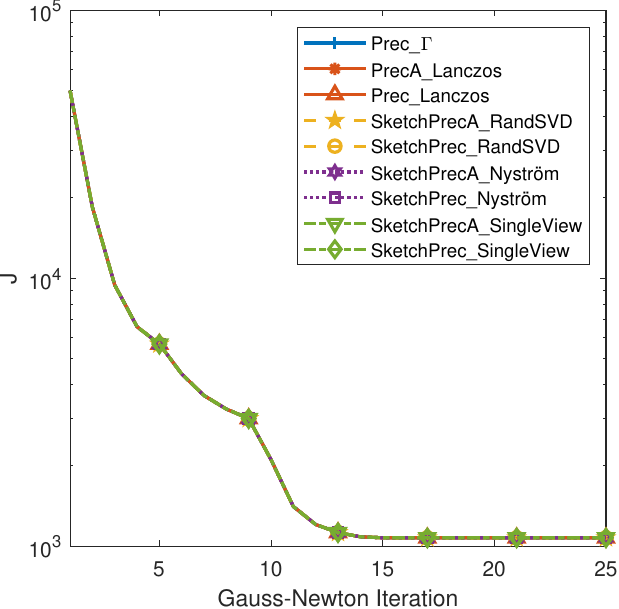}
        \caption{The SC-4DVAR cost function.}
        \label{fig:expt2_cost}
    \end{subfigure}
    \hfill
    \begin{subfigure}[b]{0.49\textwidth}
        \centering
        \includegraphics[width=\linewidth]{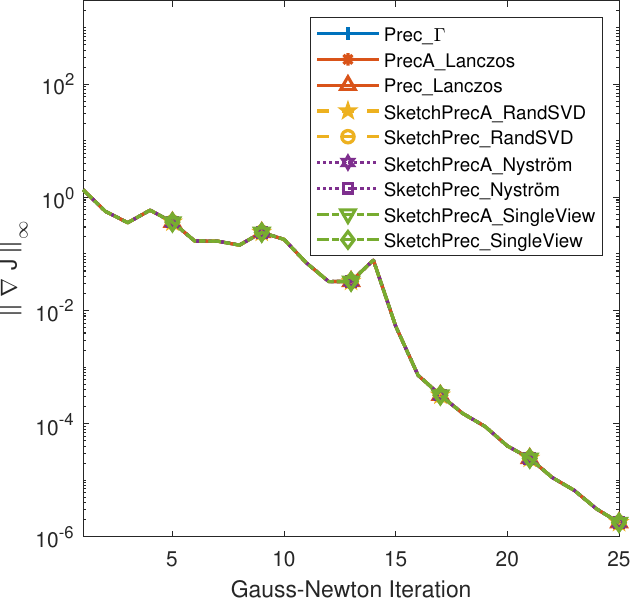}
        \caption{Infinity norm of the gradient.}
        \label{fig:expt2_grad}
    \end{subfigure}
    \caption{Plots showing the convergence of the optimization.}
    \label{fig:expt2b}
\end{figure}
\begin{table}[!ht]
    \centering
    \begin{tabular}{ c | c | c | c | c | c | c}
       Method & \#PCG & \multicolumn{2}{c|}{Par/Offline/Setup} & \multicolumn{3}{c}{Seq/Online} \\ \hline & \thead{Iterations} & \thead{\#TLM} & \thead{\#ADJ}  & \thead{\#FWD} & \thead{\#TLM} & \thead{\#ADJ} \\\hline
      \multicolumn{7}{c}{Basic} \\ \hline
         Prec\_$\B{\gampr}$ & 15441 & --- & --- & 57 & 15441 & 15466 \\
         Prec\_Lanczos & 43 & --- & --- & 57 & 9643 & 9668 \\
         PrecA\_Lanczos & 910 & --- & --- & 57 & 2958 & 2983  \\\hline
         \multicolumn{7}{c}{SketchPrec} \\\hline
         RandSVD & 469 & 9600 & 9600 & 57 & 469 & 494 \\
         Nystr\"om & 469 & 9600 & 9600 & 57 & 469 & 494 \\
         SingleView & 1282 & 9600 & 19225 & 57 & 1282 & 1307 \\\hline
         \multicolumn{7}{c}{SketchPrecA} \\\hline
         RandSVD & 947 & 2048 & 2048 & 57 & 947 & 973 \\
         Nystr\"om & 935 & 2048 & 2048 & 57 & 935 & 960  \\
         SingleView & 853 & 5888 & 11780 & 57 & 853 & 878 \\
    \end{tabular}
    \caption{Summary of the numerical results for the BVE. All experiments use $25$ outer (GN) iterations. The results for the adaptive methods (PrecA\_Lanczos, and all SketchPrecA) do not include the cost of computing $\kappa_{\rm sk}$ and $\kappa_{\rm re}$ as it is negligible. \Cref{tab:expt2_comp_sum} summarizes the total cost for each method.}
    \label{tab:expt2_comp}
\end{table}

\paragraph{Results}

\Cref{fig:expt2_relrmse} shows the relative error over the trajectory for the analyses, and all methods have very similar error trajectories, which are all lower than the forecast trajectory error.
To demonstrate the convergence of the optimization, we also show the decrease in cost function (in \cref{fig:expt2_cost}) and the first-order optimality criterion (in \cref{fig:expt2_grad}).
These figures show that the optimization has converged and the different methods behave identically.
\Cref{fig:expt2_ncg} compares the PCG iterations required for convergence at each GN step for each of the sketching methods.
The SketchPrec methods with fixed sketch size---Lanczos, RandSVD, Nystr\"om, and SingleView---all show a large decrease in the total number of PCG iterations compared to {Prec\_$\B{\gampr}$}. 
The fixed sketch size methods are relatively constant in terms of the number of PCG iterations per GN step. 
%
%

The adaptive methods (in \cref{fig:expt2_ncg}) recompute the sketch at 4 GN iterations. 
For the remaining GN iterations, the previously computed preconditioner is reused.
For the adaptive Lanczos, RandSVD, and Nystr\"om methods, the sketch size was adaptively determined to be $\ell = 512$ at every recomputing iteration.
The adaptive SingleView chose $\ell_1 = 1536, 1408, 1536, 1408$, with $\ell_2 = 2 \ell_1 + 1$ for the recomputing iterations.

\Cref{tab:expt2_comp} shows the computational cost for all the different preconditioning methods.
{The cost of a method is measured by the total number of FWD, TLM, and ADJ model evaluations. Typically, ADJ is the most expensive computation among the three, but for this test problem, it is reasonable to assume that all three -- FWD, TLM, and ADJ cost the same. Under this assumption we have the following table (\Cref{tab:expt2_comp_sum}) with the costs in terms of PDE solves:}

\begin{table}[!ht]
    \centering
    \begin{tabular}{ c | c | c  }
       Method & {Par/Offline/Setup} & {Seq/Online} \\ \hline  
         Prec\_$\B{\gampr}$  & ---  & 30946 \\ 
         Prec\_Lanczos & ---  & 19368 \\ 
         PrecA\_Lanczos &  ---  & 5998   \\ \hline
         \multicolumn{3}{c}{SketchPrec}  \\ \hline
         RandSVD & 19200 & 1020 \\ 
         Nystr\"om & 19200 & 1020 \\ 
         SingleView & 28225 & 2646 \\ \hline
         \multicolumn{3}{c}{SketchPrecA}  \\ \hline
         
         RandSVD & 4096 & 1950 \\
         Nystr\"om & 4096 & 1950  \\
         SingleView & 17668 &  1800 \\
    \end{tabular}
    \caption{Summary of total costs in terms of PDE solves. Notice that all the offline costs can be parallelized using multiple embarrassingly parallel batches. Note that this table summarizes the results from \Cref{tab:expt2_comp}.}
    \label{tab:expt2_comp_sum}
\end{table}

Observe that the SketchPrec methods have similar total costs as Prec\_Lanczos, and SketchPrecA methods have similar total costs as PrecA\_Lanczos, all of which have a total lower cost than Prec\_$\B{\gampr}$. We emphasize that all the randomized variants allow for multiple embarrassingly parallel batches of PDE solves when compared to Prec\_$\B{\gampr}$, Prec\_Lanczos, and PrecA\_Lanczos.
For all methods---Lanczos, RandSVD, Nystr\"om, SingleView---the adaptive preconditioning methods result in a much lower cost than the respective fixed-size (preconditioner computed at every GN iteration) counterparts. 
The SketchPrecA methods have a much lower total cost than the SketchPrec counterparts, at the cost of doing some extra sequential TLM and ADJ computations. 
To conclude, in an ideal scenario with many compute cores and a lot of memory, one would prefer the SketchPrec methods, but when there is contention for compute resources and memory, SketchPrecA must be used. 
If parallel processing is not an option, then the randomized methods (RandSVD and Nyst\"om) are only as good as the Lanczos method, and either can be used.

\section{Conclusion and future work}

In this paper, we propose new algorithms for solving and preconditioning the SC-4DVAR problem to accelerate the time to solution and reduce total computations.
The algorithms are based on different randomized sketching techniques: three methods---RandSVD, Nystr\"om, and SingleView---with fixed sketch size and adaptive approaches to determine when to update the preconditioner and the sketch size. 
We perform structural and expectational analyses of the proposed methods and demonstrate their applicability to two model problems, namely the 1D-Burgers equation and the 2D barotropic vorticity equation. 
We demonstrate the gains from our preconditioning method in three ways:
\begin{enumerate*}[label={(\roman*)}]
    \item a substantial reduction in the number of TLM and ADJ solves in the online phase,
    \item a substantial reduction in the number of PCG iterations,
    \item parallelization of a major portion of the TLM and ADJ solves {in the offline phase}.
\end{enumerate*}

{For practitioners, we recommend using the SketchPrecA methods---which is the adaptive version of the randomized preconditioners. This is the most practical version since it allows for adaptivity in both the sketch size and the use of sketching across multiple Gauss-Newton iterations. Between the three sketching techniques, RandSVD and Nystr\"om have comparable performance, and SingleView is more beneficial in a parallel computational setting. The randomized preconditioned methods are especially effective when the data-misfit portion of the Hessian is approximately low-rank.}
\paragraph{Future Work}
In \cite{Tshimanga_2008}, a class of limited memory preconditioners are built by using the conjugate vectors generated during the CG iterations. It will be interesting to pursue this idea for adaptive randomized preconditioners, similar to PrecA\_Lanczos. 
In this work, we have demonstrated the approach on synthetic problems. Demonstrating the effectiveness of these methods on large-scale DA problems, such as those encountered in operational NWP, is a direction that we wish to pursue. These large-scale problems might not necessarily share the properties of the test problems on which the approaches have been demonstrated. This will pose additional challenges. Another challenge in operational NWP is that the models might not be equipped with first-order adjoints, which is necessary for our methods. However, the emergence of data-driven NWP surrogates such as FourCastNet, GraphCast, and Pangu weather, coupled with automatic differentiation, can potentially alleviate this issue \cite{pathak2022fourcastnet, lam2023learning, bi2022pangu}. Moreover, randomized approaches to data assimilation have mainly been explored for variational data assimilation and have not been effectively used in ensemble data assimilation so far. 
It would also be interesting to explore randomized preconditioners in the context of hybrid ensemble-variational data assimilation methods that are a popular area of research~\cite{subrahmanya2023evfp}.
{These ideas could also be applied to solving general inverse problems such as subsurface flow inversion, and contaminant source detection.}

\begin{acknowledgements}
    The authors thank Adrian Sandu, Ahmed Attia, and Srinivas Eswar for helpful discussions.
\end{acknowledgements}

\begin{decla}{Author Contributions}
    A.N.S wrote the required code, verified and validated the code, performed multiple experiments, and created figures. A.K.S and V.R. conceptualized the formulation, and analyzed the formulation. All authors contributed equally to writing and editing the manuscript.
\end{decla}

\begin{decla}{Funding}
    AKS was supported, in part, by the National Science Foundation through the grant DMS-1845406 and the Department of Energy through the grant DE-SC0023188. Vishwas Rao is supported by the U.S. Department of Energy, Office of Science, Advanced Scientific Computing Research Program under contract DE-AC02-06CH11357.
\end{decla}

\begin{decla}{Data Availability}
    This manuscript does not report data generation or analysis. The code used will be made available on request.
\end{decla}

\section*{Declarations}

\begin{decla}{Competing Interests}
    The authors declare that they have no competing interests.
\end{decla}

\begin{decla}{Ethics Approval and Consent to Participate}
    Not applicable.
\end{decla}

\begin{decla}{Consent for Publication}
    Not applicable.
\end{decla}

%

\bibliographystyle{spbasic}      
\bibliography{references, references_da, references_randnla}   

\end{document}